\pgfplotsset{compat=newest}
\numberwithin{equation}{section}
\renewcommand{\vec}[1]{\boldsymbol{#1}}
\newcommand{\PP}{\mathbb{P}}
\newcommand\vD{\vec D}
\newcommand\vG{\vec G}
\newcommand\vx{\vec x}
\newcommand\eps{\varepsilon}
\newcommand\Erw{\mathbb{E}}
\newcommand\ex{\Erw}
\newcommand{\Po}{{\rm Po}}
\newcommand\bc[1]{\left({#1}\right)}
\newcommand\cbc[1]{\left\{{#1}\right\}}
\newcommand\abs[1]{\left|{#1}\right|}
\newcommand{\Erdos}{Erd\"os}
\newcommand{\Renyi}{R\'enyi}
\newcommand{\Bollobas}{Bollob\'as}
\newcommand\pr{\mathbb{P}} 
\renewcommand\Pr{\pr} 
\newcommand\Lem{Lemma}
\newcommand\Prop{Proposition}
\newcommand\Thm{Theorem}
\newcommand\Def{Definition}
\newcommand\Sec{Section}
\newcommand\Rem{Remark}
\renewcommand{\footnoterule}{%
	\kern -3pt
	\hrule width \textwidth height 0.4pt
	\kern 2.6pt
}
\newtheorem{definition}{Definition}[section]
\newtheorem{claim}[definition]{Claim}
\newtheorem{remark}[definition]{Remark}
\newtheorem{theorem}[definition]{Theorem}
\newtheorem{condition}[definition]{Condition}
\newtheorem{lemma}[definition]{Lemma}
\newtheorem{proposition}[definition]{Proposition}
\newtheorem{fact}[definition]{Fact}
\newcommand{\floor}[1]{\left\lfloor#1\right\rfloor}
\newcommand{\ind}[1]{{\pmb 1}\{#1\}}
\def\pr{{\mathbb P}}
\def\ks{\mathcal{K}_{\textbf{CM}}}
\def\ph{{\widehat{\phi}}}
\renewcommand{\aa}{\alpha_\ast}
\newcommand{\ab}{\alpha^{\ast}}
\newcommand{\hphi}{\widehat{\phi}}
\newcommand{\CM}{\textbf{CM}}
\newcommand{\msgL}{\textcolor{red}{\textbf{L}}}
\newcommand{\msgM}{\textcolor{blue}{\textbf{M}}}
\newcommand{\msgU}{\textcolor{green}{\textbf{U}}}
\newcommand{\msg}[3]{\mu_{#1\to #2}^{(#3)}}
\newcommand{\kll}{\star}
\newcommand\biasp{\widehat{p}}
\newcommand{\dtv}{d_{\mathrm{TV}}}
\begin{document}
	\title{Asymptotic size of the Karp-Sipser Core in Configuration Model}
	\author{
		Arnab Chatterjee$^{*}$}
	\thanks{\raggedright${*}$TU Dortmund, Faculty of Computer Science, Otto-Hahn-Str. 12, 44227 Dortmund, Germany. \texttt{arnab.chatterjee@tu-dortmund.de}} 
	\author{Joon Hyung Lee$^{\dagger}$}
	\thanks{\raggedright${\dagger}$Leiden Institute of Advanced Computer Science, Einsteinweg 55, 2333 CC Leiden, the Netherlands. \texttt{j.h.lee@liacs.leidenuniv.nl}} 
	\author{Haodong Zhu$^{\ddagger}$}
	\thanks{\raggedright${\ddagger}$Eindhoven University of Technology, Department of Mathematics and Computer Science, 5600 MB Eindhoven, the Netherlands. \texttt{h.zhu1@tue.nl}} 
	
	\maketitle
	
	\begin{abstract}
		We study the asymptotic size of the Karp-Sipser core in the configuration model with arbitrary degree distributions. 
		The Karp–Sipser core is the induced subgraph obtained by iteratively removing all leaves and their neighbors through the leaf-removal process, and finally discarding any isolated vertices \cite{BCC}.
		Our main result establishes the convergence of the Karp-Sipser core size to an explicit fixed-point equation under general degree assumptions.
		The approach is based on analyzing the corresponding local weak limit of the configuration model -- a unimodular Galton-Watson tree and tracing the evolution process of all vertex states under leaf-removal dynamics by use of the working mechanism of an enhanced version of Warning Propagation along with Node Labeling Propagation.
		
		\hfill MSc: 05C80, 60C05, 68W20    
	\end{abstract}
	
	\section{Introduction and Results}\label{sec_intro}
	\subsection{Background and motivation}\label{sec_background}
	Matchings have a long and exciting history in combinatorics and computer science.
	In the words of Lov\'asz and Plummer \cite{LovaszPlummer} "Matching theory has often been in attendance when many of the exciting new concepts in combinatorial optimization have been born."
	Going back to the theoretical results from the 1960s, one of the fundamental theorems in random graph theory, established by \Erdos \, and \Renyi \cite{ER1} describes the asymptotic probability that a random graph possesses a \emph{perfect matching}. 
	A matching on a finite graph $\mathcal{G}=(V,E)$ is a subset of pairwise non-adjacent edges $E'\subseteq E$. Then the total number of isolated vertices $|V|-2|E'|$ of $(V,E')$ is said to be revealed by $E'$.
	Obviously, a perfect matching can exist only if the number of vertices is even. 
	In 1947, Tutte characterized all of the graphs with a perfect matching \cite{LovaszPlummer} by stating that a graph $\mathcal{G}$ has a perfect matching iff there is no set of vertices $V'$ with the property that $\mathcal{G}\setminus V'$ contains more than $|V'|$ components with odd number of vertices.
	For a given graph, define a leaf-component to be a connected component of a subgraph obtained by a single edge deletion from the graph, which again has the property that any further deletion of one edge wouldn't disconnect it.
	Further, in 1965 Edmonds \cite{LovaszPlummer} established the first polynomial time algorithm for obtaining a maximum matching.
	
	Besides this, matching in the \Erdos-\Renyi \, random graph $\mathcal{G}(n,M)$ has been extensively studied in the literature. 
	In the paper \cite{ER1} \Erdos \, and \Renyi~ proved the threshold for the existence of a perfect matching in the ER-random graph.
	More precisely, let $n$ be even and a sequence of real numbers $c_1,c_2,\cdots,c_n$, then for the \Erdos-\Renyi \, random graph with $n$ vertices and $M=\floor{\frac{n(\log n+c_n)}{2}}$ edges, as $n\to \infty$ we have
	\begin{align*}
		\Pr\left(\mathcal{G}\left(n,M=\floor{\frac{n(\log n+c_n)}{2}}\right)\mbox{has a perfect matching}\right)
		\to
		\begin{cases}
			0  & \text{if} \: c_n \to -\infty, \\ 
			e^{-e^{-c}} & \text{if} \: c_n \to c, \\ 
			1 & \text{if} \: c_n \to +\infty.
		\end{cases}
	\end{align*} 
	The groundbreaking work of obtaining the near maximal matching on a given graph was first introduced in 1981 by Karp and Sipser in their seminal paper \cite{KarpSipser} known as \emph{Karp-Sipser leaf removal algorithm}. 
	The initial motivation of Karp and Sipser to consider this algorithm was that the leaves and isolated vertices removed during their process form an independent set of $\vG$ with very high density. 
	Although the finding of the maximal size independent set problem is an NP-hard problem, the Karp-Sipser algorithm provides a fair lower bound on this.
	Further, they showed in their paper \cite{KarpSipser} that the algorithm is more efficient in a sparse \Erdos-\Renyi \, random graph. 
	Later, Aronson, Frieze and Pittel \cite{KSR} demonstrated that this algorithm produces a maximum matching with high probability when the average degree is less than $e$, and provides a matching that is sub-linearly close to the maximum size when the average degree is greater than $e$.  
	\begin{algorithm}[h!]
		\KwData{ An undirected graph $(\mathcal{G} = V, E)$} 
		\KwResult{ A matching $ E' \subseteq E$}  
		
		\begin{enumerate}
			\item Initialize \( E' \gets \emptyset \),  $V \gets V\backslash\cbc{v:~\deg(v) = 0}$.
			\item \label{it_2}\textbf{While} $\exists$ a vertex $( v \in V )$ such that $\deg(v) = 1 $ \textbf{do}:  
			\begin{enumerate}
				\item Let \( u \) be the unique neighborhood of $v$.
				\item Add edge $\{u, v\}$ to the matching: $E'\gets E' \cup \{\{u, v\}\} $.
				\item Remove $u$ and $v$ from the graph:
				$V \gets V \setminus (u, v)$ and 
				$E \gets E \setminus \{x, y\} \in E : x \in (u,v) \text{ or }$ \\ 
				$y \in (u, v)$.
				\item  $V \gets V\backslash\cbc{v:~\deg(v) = 0}$.
			\end{enumerate}
			\item \textbf{If} \( E = \emptyset \), \textbf{terminate}.
			\item \textbf{While} \( E \neq \emptyset \) \label{it_4}\textbf{do}:
			\begin{enumerate}
				\item Choose an arbitrary edge  $\{x, y\} \in E$.
				\item Add $\{x, y\}$ to the matching: $E' \gets E' \cup \{\{x, y\}\}$.
				\item Remove \( x \) and \( y \) from the graph:
				$V \gets V \setminus (x,y)$ and 
				$E \gets E \setminus \{s, t\} \in E : s \in (x, y) \text{ or } t \in (x, y)$
				\item Return to Step 1.
			\end{enumerate}
			\item \textbf{Return} \( E' \).
		\end{enumerate}
		\caption{The Karp-Sipser Algorithm}\label{alg_karpsipser}
	\end{algorithm}
	
	Surprisingly, the algorithm~\ref{alg_karpsipser} does not guarantee an optimal matching. 
	Consider the graph in Figure~\ref{fig:karp-sipser-example} (the complete graph on four vertices with the edge $\{1,2\}$ removed). 
	If in step~(\ref{it_4}) the algorithm chooses $\{x,y\}=\{3,4\}$, then all edges in the graph are deleted and the result is $E'=\{\{3,4\}\}$. 
	However, the matching $\{\{1,3\},\{2,4\}\}$ has size~2 and is therefore strictly better.
	\begin{figure}[htbp]
		\centering
		\begin{tikzpicture}[
			every node/.style={circle,draw,minimum size=18pt,inner sep=0pt},
			node distance=2.2cm
			]
			\node (1) {1};
			\node[right=of 1] (2) {2};
			\node[above=of 1] (3) {3};
			\node[above=of 2] (4) {4};
			
			\draw[thick] (1)--(3);
			\draw[thick] (1)--(4);
			\draw[thick] (2)--(3);
			\draw[thick] (2)--(4);
			\draw[thick] (3)--(4);
		\end{tikzpicture}
		\caption{An example input for the Algorithm~\ref{alg_karpsipser}.}
		\label{fig:karp-sipser-example}
	\end{figure}
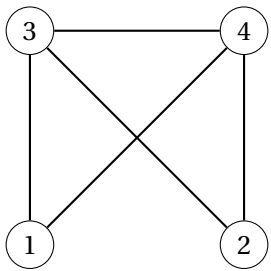
	
	The \emph{Karp–Sipser core}, which arises after the Karp–Sipser algorithm (pseudocode in Algorithm~\ref{alg_karpsipser}) step (\ref{it_2}), is the graph obtained by repeatedly removing all degree-$1$ vertices together with their neighbors, excluding isolated vertices.
	Like in the $k$-core of a random graph, Bauer and Golinelli showed \cite{BGC} that the Karp-Sipser core is independent of how we choose the vertices to remove in the first iteration, which indicates that the Karp-Sipser core is an intrinsic characteristic of the graph.
	The study of Karp-Sipser core not only assists us in finding a near maximum matching but also has applications in other areas, such as independent set \cite{GNS} and the rank of the adjacency matrices \cite{BLS} -- thus helps us gain a better understanding of large scale networks.
	However, most of the papers only discuss the Karp-Sipser core in the sparse \Erdos-\Renyi \, random graph, and there are very few results in the general random graph models.
	The main reason of picking up the sparse \Erdos-\Renyi \, random graph is that the Karp-Sipser core has an almost perfect matching \cite{BLS,BLS2} in that model and the same property applies to the simple configuration model investigated in \cite{BCC}.
	
	The paper aims to study the size of the Karp-Sipser core in the general configuration model and estimate its asymptotic behavior.  
	A first step towards the Karp-Sipser core in the configuration model was undertaken in a recent contribution by Budzinski and Alice \cite{BC}.
	However, their approach relies on the differential equation method and builds on their previous work on a configuration model with bounded degrees.
	Instead, here we establish a limiting distribution for the Karp-Sipser core in the general configuration model assuming the bounded moment conditions on the degree distribution and the presence of degree-$1$ vertices.
	Specifically, we examine the corresponding Galton-Watson tree, which serves as a limiting object of the "local structure" of the configuration model and can be formalized nicely in the language of \emph{"local weak convergence"} \cite{AS,BordenaveCaputo,BS}.
	Our main result uses the working mechanism of the enhanced version of the Warning Propagation algorithm along with Node Labeling and characterizes it in terms of fixed-point recursive equations involving the degree generating function and its size-biased counterpart.
	We rigorously prove the matching bound on the size of the Karp-Sipser core \emph{with high probability} ('w.h.p.') which confirms the tightness of our asymptotic characterization. 
	This is the first result on the asymptotic size of the Karp-Sipser core in any general configuration model with arbitrary degree distributions.
	\subsection{The Configuration model}
	In the context of regular graphs, \emph{configuration model} was first introduced in 1980 by \Bollobas \cite{Bollobas} which was again drawn to attention after the work by Molloy and Reed \cite{MolloyReed} in 1995.
	This model is a widely used method for constructing random graphs with a prescribed degree distribution.
	It allows us to generate random graphs with arbitrary degree sequences, including heavy-tailed distributions, and is suited particularly for the asymptotic analysis in the sparse regime.
	\begin{definition}\label{ConfigModel}
		Let $n\in \mathbb{N}$ be the number of vertices in the random graph. Consider a sequence of degrees $d^{(n)}=(d_{1}^{(n)},d_{2}^{(n)},\cdots,d_{n}^{(n)}) \in \mathbb{N}^{n}$, where each $d_{i}^{(n)}$ denotes the degree assigned to the vertex $i$ of our graph.
		The configuration model denoted by, $\CM_n(d^{(n)})$ is defined as follows:
		\begin{enumerate}[label=(\roman*).]
			\item A set of $d_{i}^{(n)}$ half edges is assigned to each vertex $i$.
			\item Pair the $\sum_{i=1}^{n} d_{i}^{(n)}$ half-edges uniformly at random to form full edges.
		\end{enumerate}
	\end{definition}
	As we aim to construct an undirected (multi) graph with $n$ vertices, where each vertex $j$ has degree $d_{j}$, we assume that the total degree is even i.e. $\sum_{i=1}^{n} d_{i}^{(n)} \equiv 0\mod 2$.
	The resultant random graph may contain self-loops and multiple edges, but for the sake of simplicity, we consider a simple multigraph (although it is not always possible to construct such a simple graph with a given degree sequence) which does not contain any self-loops and multiple edges between any pair of vertices.
	One possible way to construct such a simple multi-graph with a prescribed degree sequence is to pair the half-edges uniformly attached to the different vertices of the graph, and thus form an edge by adding two half-edges.
	We continue this mechanism of randomly choosing and pairing the half edges until all half edges are connected and call the resulting graph the \textit{configuration model} with degree sequence $d^{(n)}$, abbreviated as $\CM_n(d^{(n)})$.
	
	Throughout the paper, we denote our degree sequence by $d^{(n)}$. 
	In particular, for each fixed $n$, since we have a degree sequence of our configuration model, we are dealing with finite-degree sequences.
	So, instead of writing the degree sequence as $d^{(n)}=(d_{1}^{(n)},d_{2}^{(n)},\cdots,d_{n}^{(n)})$, we keep the notation simple.  
	
	We study the asymptotic behavior of the model $\CM_n$ under the assumption that the empirical degree distribution converges to a fixed distribution as $n \to \infty$.
	Let $\vD_n$ be a random variable chosen uniformly from $(d_{1}^{(n)},d_{2}^{(n)},\cdots,d_{n}^{(n)})$ with $d_{i}=d_{i}^{(n)}$ being the degree of vertex $i$ in $\CM_n$.  Also, let $\vD$ be a random variable such that $\pr[\vD=k]=p_{k}$. 
	Then the empirical distribution of the vertex degrees is given by
	\begin{align*}
		\varsigma_{n}(z)=\frac{1}{n}\sum_{i=1}^{n}\mathbb{1}_{\{d_{i}\leq z\}}
	\end{align*}
	where $\varsigma_{n}$ is the distribution function of $\vD_{n}$. Now, we impose the regularity conditions \cite{Remco} for the degrees of the vertex assuming that the degrees of the vertex satisfy the following conditions.
	
	\begin{condition}[Regularity conditions for vertex degrees.]\label{CM_cond}
		\leavevmode\\[-2ex]
		\begin{enumerate}[label=(\roman*).]
			\item \textbf{Weak Convergence of vertex degrees:} There exists a distribution function $\varsigma$ such that for any $z\in \mathbb{R}$,
			\begin{align}\label{CM_wc}
				\lim_{n\to\infty}\pr[\vD_n\leq z]=\pr[\vD\leq z]
			\end{align}
			where $\vD_{n}$ and $\vD$ have the distribution functions $\varsigma_{n}$ and $\varsigma$, respectively.
			Equivalently, we can say that $\vD_n \xrightarrow{d}\vD$.
			\item \textbf{Convergence of the average vertex degrees:} We assume that $\varsigma(0)=0$, i.e., $\pr[\vD\geq 1]=1$ as well as the finite first moment on the convergence of the average vertex degrees.
			\begin{align}\label{CM_first_moment}
				\lim_{n\to \infty} \ex[\vD_{n}]=\ex[\vD] < \infty
			\end{align}
			\item \textbf{Bounded vertex degrees:} We assume the vertex degree have a uniform upper bound for all $n$.
		\end{enumerate}
	\end{condition}
	From the above condition~\ref{CM_cond} since we know that the degrees $d_i$ only take integer values, so does $\vD_n$; therefore, the limiting random variable must be $\vD$. 
	As a consequence, the limiting distribution function $\varsigma$ is constant between integers which implies $\varsigma$ does have discontinuity points, and the weak convergence usually implies \ref{CM_wc} only at continuity points.

	\subsection{Local Weak Limit of the Configuration Model}
	To study the local structures of the configuration model, particularly the asymptotic behavior of the Karp-Sipser algorithm near a typical vertex, it is convenient to use the theory of the local weak convergence of random graphs.
	The limiting objects of our configuration model $\CM_n$ is an infinite rooted tree which captures the local neighborhood of a vertex chosen uniformly at random as $n\to \infty$.
	
	Let $\mathcal{G^{*}}$ be the space of connected, rooted, locally finite graphs equipped with the local topology, more specifically, it converges upto a finite radius. 
	We also define the random rooted graph $(\CM_n,\rho_n)$, where $
	\rho_n$ is a vertex chosen uniformly at random.  
	Moreover, the sequence $(\CM_n,\rho_n)$ converges in distribution in $\mathcal{G^{*}}$ to a Galton-Watson tree $\mathcal{T}$ with offspring distribution of rooted vertex is $D$ and each non-root vertex in $\mathcal{T}$ has offspring distribution is $\widehat{D}$. Then for any $k \geq 1$ we have
	\begin{align}\label{edge_biased_dist}
		\biasp_{k} := \pr[\widehat{D}=k-1]=\frac{kp_{k}}{\sum_{j\ge 0} jp_{j}}
	\end{align}
	Equivalently, if $\phi(\alpha)=\sum_{k\geq 0} p_{k}\alpha^{k}$ be the generating function of $D$, then $\widehat{D}$ (offspring distribution of the vertices other than root in $\mathcal{T}$) has the generating function
	\begin{align}\label{nr_gen_func}
		\widehat{\phi}(\alpha)&=\frac{\phi'(\alpha)}{\phi'(1)}=\frac{\sum_{k\geq 1}k p_k \alpha^{k-1}}{\sum_{k\geq 1} k p_k} 
	\end{align}
	It is known that when $\sum_{k\geq 0} k(k-2)p_k\leq 0$, the size of the Karp-Sipser core is sublinear.
	Hence, we assume in the rest of the paper that $\sum_{k\geq 0} k(k-2)p_k > 0$ as well as $p_1 >0$.  This condition corresponds to the emergence of a giant core, in analogy with the giant component threshold first studied in random graphs with given degrees by Molloy and Reed~\cite{MolloyReed}.
	Thus, the Galton-Watson tree $\mathcal{T}$ serves in the asymptotic regime as the canonical object for analyzing local algorithms including the Karp-Sipser leaf removal process. 
	In the next section, we will introduce the Karp-Sipser leaf removal algorithm both in graphs as well as in a rooted Galton-Watson tree which serves as a local limit of our configuration model $\CM_n$ respectively. 
	The obvious question is why we treat the Karp-Sipser leaf removal algorithm differently in the above two cases. 
	This is because in the rooted Galton-Watson tree, the offspring distribution of the root and that of other vertices are different, which is not the case in random graphs. 
	
	\subsection{The main result} To state the main result, we need to consider the function 
	\begin{align}\label{zeta_func}
		\zeta(\alpha)=\alpha+\ph(1-\ph(\alpha))-1
	\end{align}  
	Let $\alpha_{*}\in [0,1]$ be the smallest and $\alpha^{*}\geq \alpha_* \in [0,1]$ the largest solution of $\zeta(\alpha)=0$.
	The following theorem provides an asymptotic size of the Karp-Sipser core in the general configuration model, given conditions listed in the statement of the theorem.
	\begin{figure}[H]
		\centering
		\begin{subfigure}[]{0.33\textwidth}
			\centering
			\includegraphics[width=\textwidth]{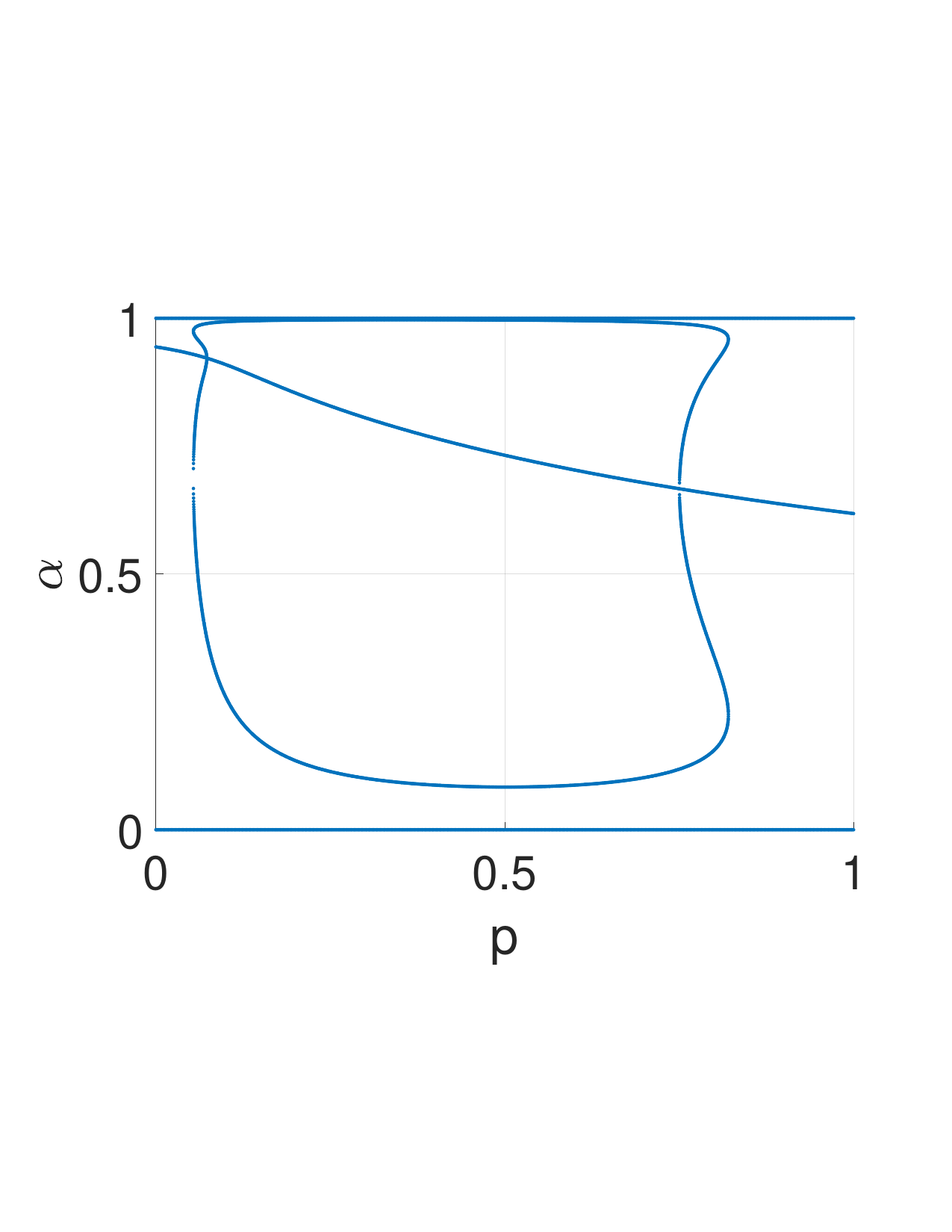}
		\end{subfigure}
		\hfill
		\begin{subfigure}[]{0.33\textwidth}
			\centering
			\includegraphics[width=\textwidth]{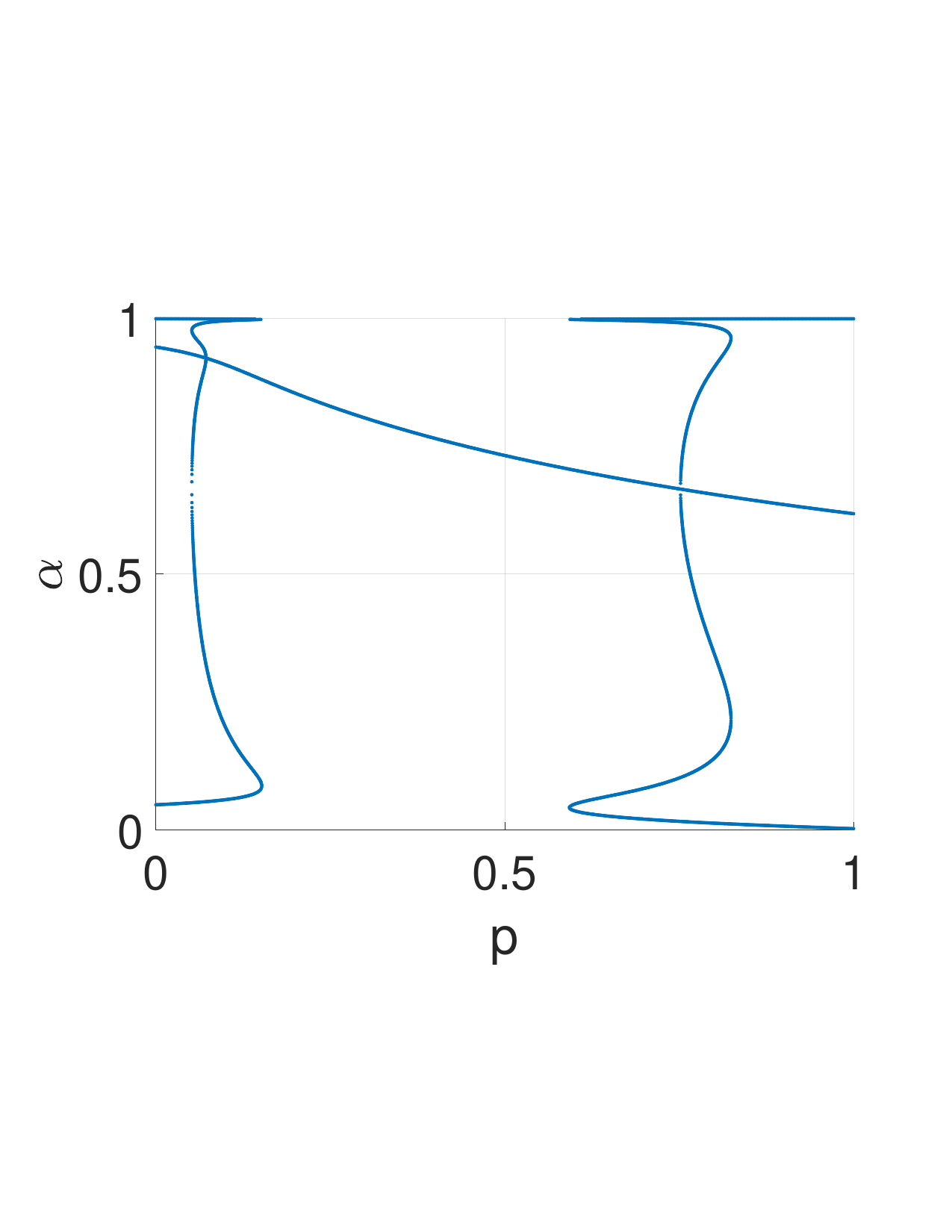}
		\end{subfigure}
		\hfill
		\begin{subfigure}[]{0.33\textwidth}
			\centering
			\includegraphics[width=\textwidth]{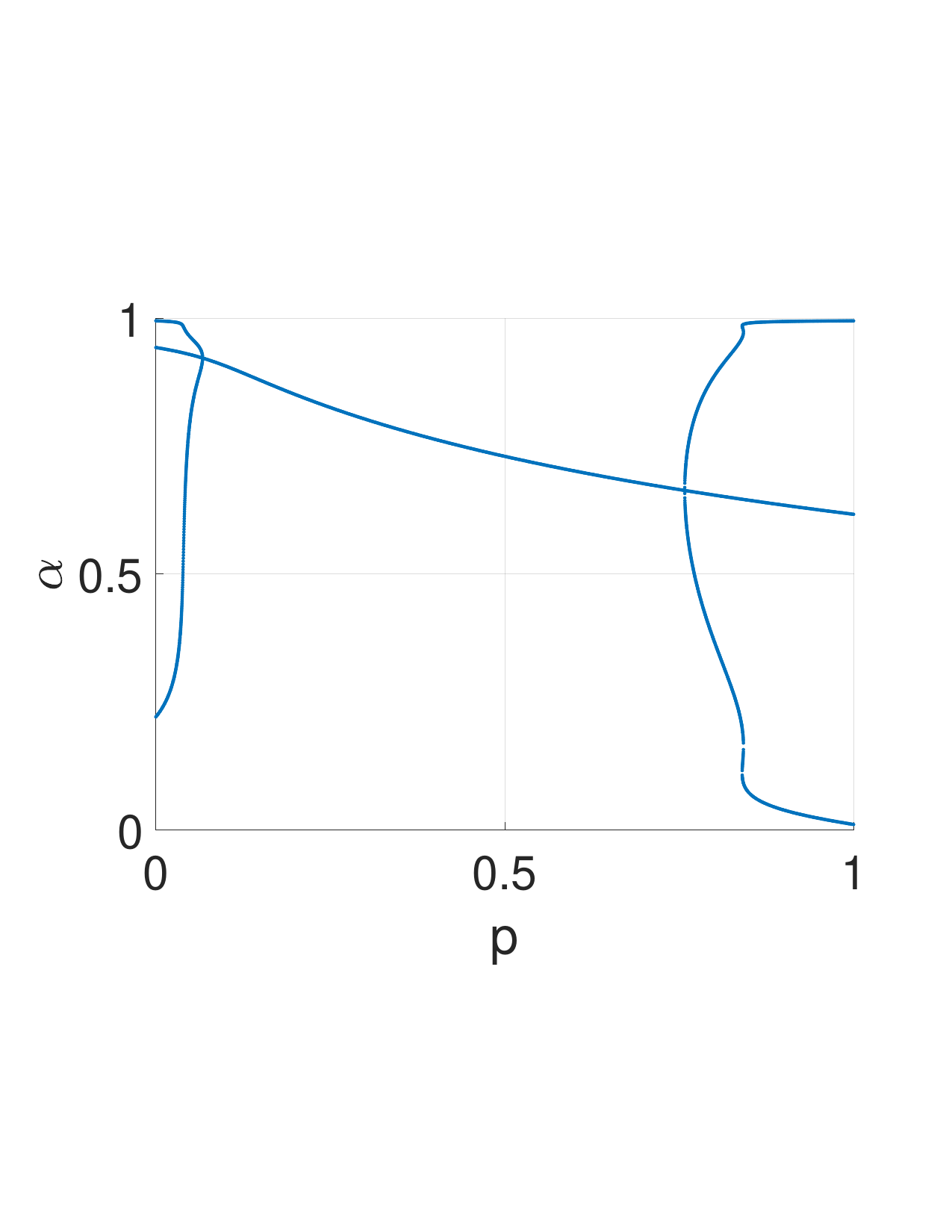}
		\end{subfigure}
		\vspace{-4em}
		
		\caption{Solutions to $\zeta(\alpha)=0$ defined in \eqref{zeta_func} for various $p$ at three values of $q$, with $\hat{\phi}(\alpha)=q+(1-q)(p \alpha^2+(1-p)\alpha^{50})$. \textbf{Left:} $q=0$. \textbf{Middle:} $q=0.001$. \textbf{Right:} $q=0.005$.}
		\label{fig_fixed-p-xi}
	\end{figure}
	\begin{theorem}\label{thm_KSRT}
		Let ~$\CM_{n\geq 0}$ be a sequence of configuration model that satisfies condition \ref{CM_cond} with the limiting degree distribution $(p_k)_{k\geq 1}$ such that $p_1 >0$ and $\sum_{k\geq 0} k(k-2)p_k > 0$. If $\ph'(\alpha_*)\ph'(\alpha^*)<1$, then the size of the Karp-Sipser core of ~$\CM_n$ denoted as $\ks$ satisfies
		\begin{align}\label{thm_main}
			\lim_{n\to\infty} \frac{|\ks|}{n} \xrightarrow{p} \phi(\alpha^*)-\phi(\alpha_*)-(\alpha^*-\alpha_*)\phi'(\alpha_*) 
		\end{align}
	\end{theorem}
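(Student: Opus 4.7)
The plan is to compute the asymptotic density of the Karp-Sipser core by transferring the analysis to the local weak limit of $\CM_n$, namely the unimodular Galton-Watson tree $\cT$ with root offspring law $(p_k)$ and non-root offspring law $(\hat p_k)$ given by~\eqref{edge_biased_dist}. Under Condition~\ref{CM_cond}, $(\CM_n,\rho_n)\to(\cT,\rho)$ in the space of rooted locally finite graphs. The Karp-Sipser status of a vertex will turn out to be determined by a bounded-depth neighbourhood with high probability (by the strict contraction afforded by the stability hypothesis). Combined with the bounded-degree assumption in Condition~\ref{CM_cond}(iii), a second-moment / bounded-difference argument upgrades convergence of the mean density to convergence in probability. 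So the task reduces to computing the probability that the root of $\cT$ lies in the Karp-Sipser core of $\cT$.

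On $\cT$ I would set up a three-state Warning Propagation on the directed edges $(u,v)$: the message $\mu_{u\to v}$ takes the value $\onemsg$ if, in the subtree rooted at $u$ with $v$ removed, $u$ becomes a leaf and wants to match with $v$; $\zeromsg$ if $u$ is removed as the matching partner of a would-be-leaf child before becoming a leaf; and $\kll$ otherwise, i.e.\ if $u$ survives with at least one live $\kll$-child. The update rule reads $\mu_{u\to v}=\onemsg$ iff every child of $u$ sends $\zeromsg$; $\mu_{u\to v}=\zeromsg$ iff some child sends $\onemsg$; and $\mu_{u\to v}=\kll$ otherwise. Writing $a,b,c$ for the marginal probabilities of the three messages at a non-root vertex, the Galton-Watson structure yields $a=\ph(b)$ and $b=1-\ph(1-a)$; the substitution $\alpha=1-a$ reduces this system to $\zeta(\alpha)=0$. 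The natural Karp-Sipser initialisation $a_0=0$ (``nothing classified yet'') iterates monotonically under $G(a)=\ph(1-\ph(1-a))$ to the fixed point $(a,b,c)=(1-\alpha^*,\alpha_*,\alpha^*-\alpha_*)$, using the involution $\alpha^*=1-\ph(\alpha_*)$ that swaps the two roots of $\zeta$. The hypothesis $\ph'(\alpha_*)\ph'(\alpha^*)<1$ is precisely $|G'|<1$ at this fixed point, giving exponential convergence of the iterates and hence the local determinism invoked in the first paragraph.

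The Node Labeling step then identifies the core exactly: the root $\rho$ lies in the Karp-Sipser core of $\cT$ if and only if none of its incoming messages is $\onemsg$ (otherwise $\rho$ is matched with a would-be-leaf neighbour) and at least two are $\kll$ (otherwise $\rho$ is isolated, or a residual degree-one vertex in the surviving graph that gets matched with its unique $\kll$-neighbour in a subsequent round). Using the root degree law, a direct inclusion-exclusion yields
\begin{align*}
\pr[\rho\in\text{core}(\cT)] \;=\; \phi(\alpha^*) \;-\; \phi(\alpha_*) \;-\; (\alpha^*-\alpha_*)\,\phi'(\alpha_*),
\end{align*}
as the decomposition $\pr[\text{no }\onemsg]-\pr[\text{all }\zeromsg]-\pr[\text{exactly one }\kll,\text{ rest }\zeromsg]$. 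This labeling is internally consistent: a $\kll$-message from $u$ to $\rho$ already forces $u$ to have at least one $\kll$-child in its own subtree, so $u$ has degree $\ge 2$ in the full graph and is not itself a residual leaf that could trigger a further cascade.

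The hardest step will be to make this Node Labeling argument rigorous on $\cT$ and then to show that the same classification is exactly what the iterative Karp-Sipser process on $\CM_n$ actually produces. This is precisely the role of the enhanced Warning Propagation combined with Node Labeling Propagation: under the strict contraction $\ph'(\alpha_*)\ph'(\alpha^*)<1$, perturbations to the WP messages decay geometrically in tree distance, so in the finite graph the ultimate Karp-Sipser status of a vertex depends only on its $O(\log n)$-neighbourhood with high probability. Combined with local weak convergence to $\cT$ and concentration from the bounded-degree assumption, this transfers the tree-based formula to $|\ks|/n$ and yields the claimed limit in probability.
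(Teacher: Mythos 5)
Your proposal follows essentially the same route as the paper: the identical three-state Warning Propagation (your $\onemsg/\zeromsg/\kll$ are the paper's $\msgL/\msgM/\msgU$ with the same update rule), the same fixed-point system $a=\hphi(b)$, $b=1-\hphi(1-a)$ reducing to $\zeta(\alpha)=0$ with monotone convergence from the all-$\kll$ initialization to $(1-\alpha^*,\alpha_*,\alpha^*-\alpha_*)$, the same node-labeling criterion (no $\msgL$ and at least two $\msgU$) yielding exactly the paper's inclusion-exclusion $\phi(\alpha^*)-\phi(\alpha_*)-(\alpha^*-\alpha_*)\phi'(\alpha_*)$ on the limiting tree, and the same transfer via local weak convergence plus rapid WP stabilization under $\hphi'(\alpha_*)\hphi'(\alpha^*)<1$. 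The one point where the paper is more careful than your sketch is the stability step: since $\hphi'(\alpha_*)$ or $\hphi'(\alpha^*)$ individually may exceed $1$, the one-step distributional map does not contract in total variation, so the paper introduces the weighted metric $\mathscr{D}$ (in which the one-step linearization, with off-diagonal entries $\hphi'(\alpha_*)$ and $\hphi'(\alpha^*)$, has norm below $1$) and plugs into the black-box stabilization theorem of Cooley--Lee--Ravelomanana, which yields that all but $\delta n$ message statuses freeze after constantly many rounds --- a weaker but sufficient statement than your per-vertex $O(\log n)$-neighbourhood determinism, which as stated is stronger than what is proved and would itself require the same weighted-metric (or two-step) contraction machinery rather than just $|G'|<1$ for the scalar iteration.
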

	Clearly, the r.h.s. of \eqref{thm_main} is the probability that the root of the random graph $\mathcal{G}\sim \CM_n$ survives in the Karp-Sipser core of $\mathcal{G}$. 
	The proof of the \Thm~\ref{thm_KSRT} relies on the mechanism of an enhanced version of Warning Propagation together with \emph{'Node Labeling Propagation'} and the fixed point analysis of the $\zeta(\alpha)$ function.
	
	\begin{figure}[h!]
		\centering
		\begin{subfigure}[t]{0.33\textwidth}
			\centering
			\includegraphics[width=\textwidth]{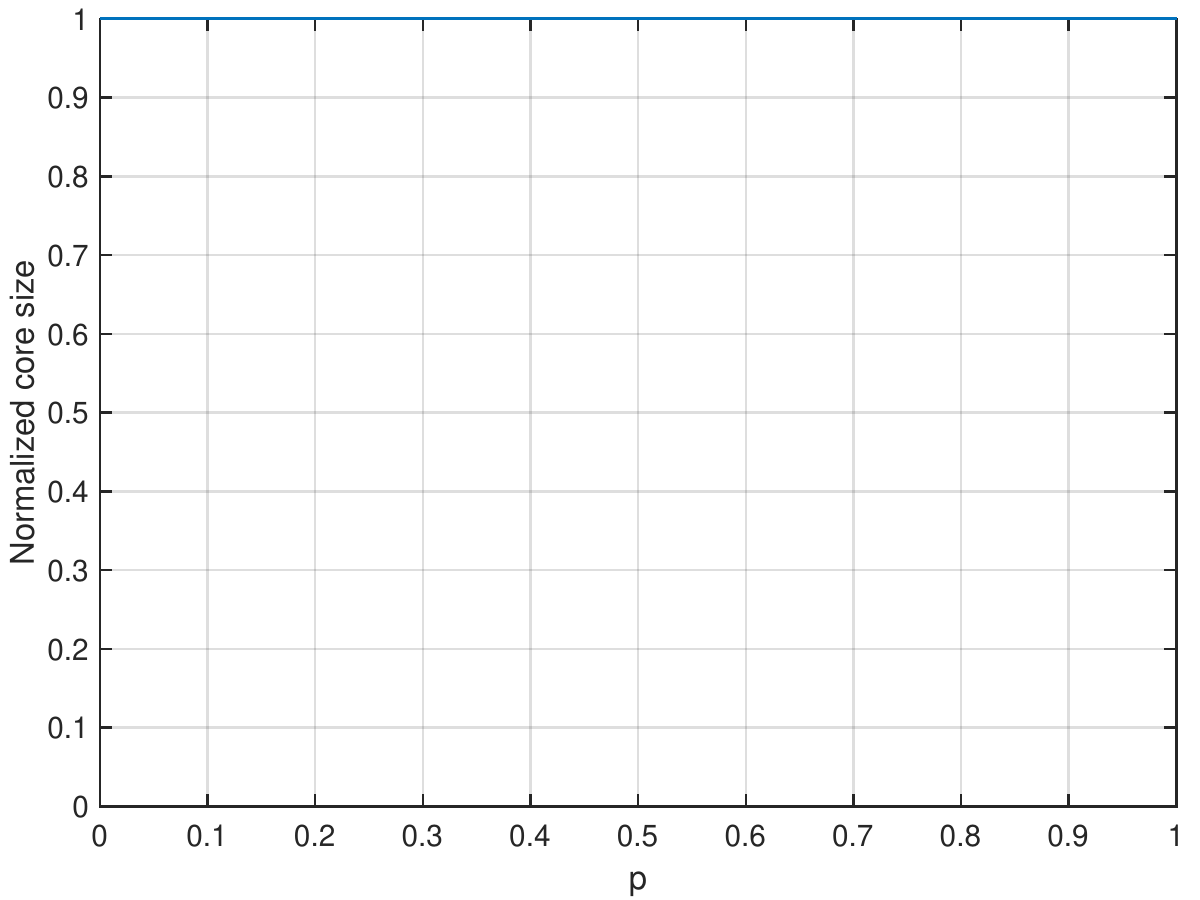}
		\end{subfigure}
		\hfill
		\begin{subfigure}[t]{0.33\textwidth}
			\centering
			\includegraphics[width=\textwidth]{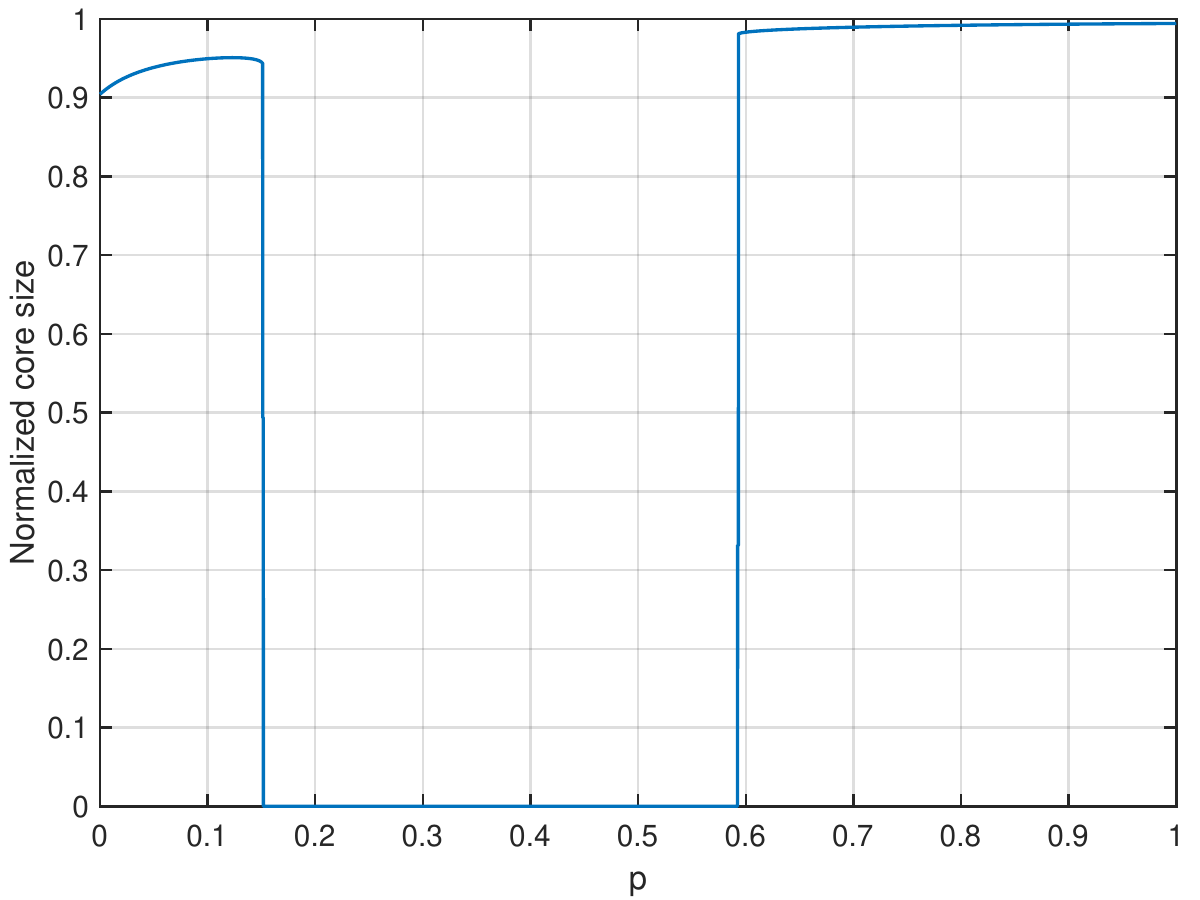}
		\end{subfigure}
		\hfill
		\begin{subfigure}[t]{0.33\textwidth}
			\centering
			\includegraphics[width=\textwidth]{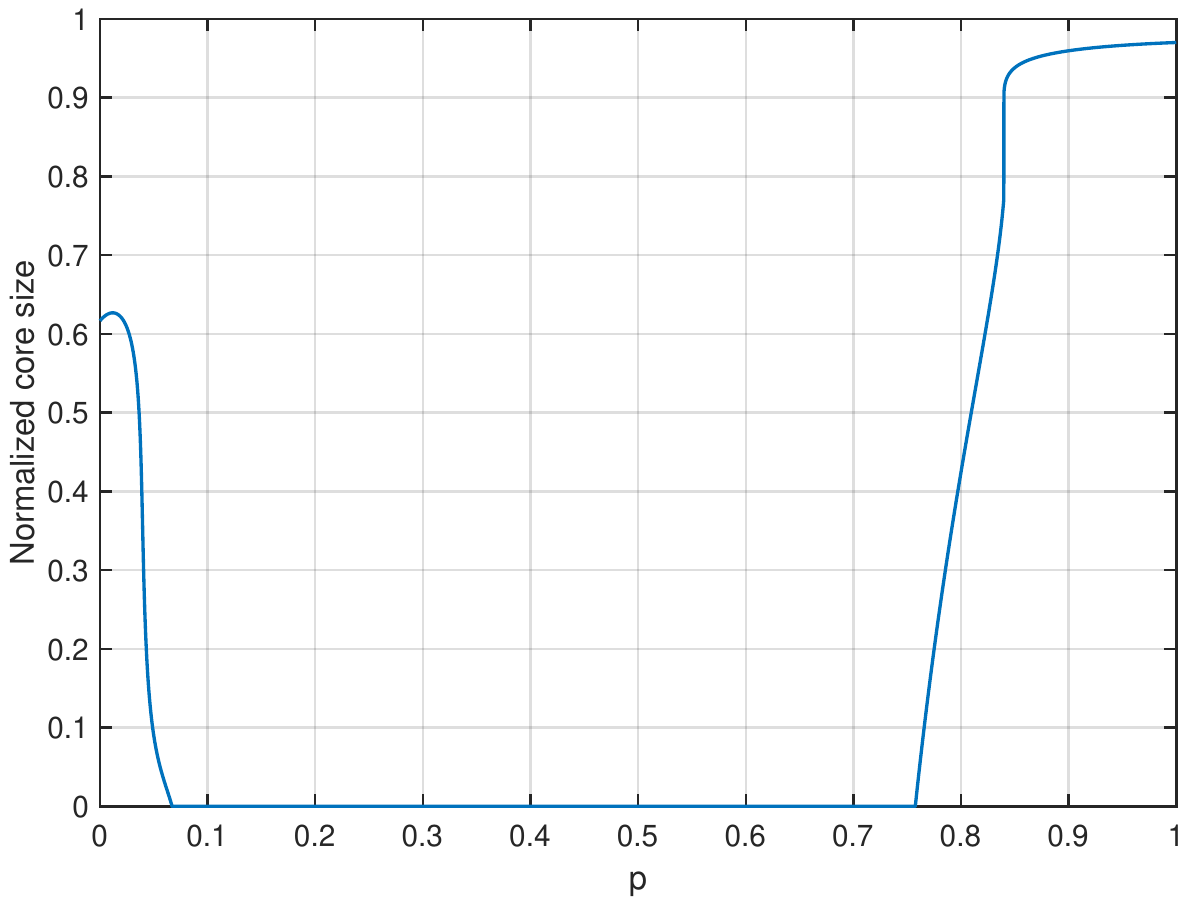}
		\end{subfigure}
		\vspace{-2em}
		
		\caption{The normalized core size for various $p$ at three values of $q$, with
			$\phi(\alpha) =
			\frac{q + (1-q)\left( p\,\alpha^{3}/3 + (1-p)\,\alpha^{51}/51 \right)}
			{q + (1-q)\left( p/3 + (1-p)/51 \right)}$.
			\textbf{Left:} $q=0$. \textbf{Middle:} $q=0.001$. \textbf{Right:} $q=0.005$.
		}
		\label{fig_coresize}
	\end{figure}
	In \Cref{fig_coresize}, the choice of $\phi$ ensures that $\hphi$ in \Cref{fig_fixed-p-xi} satisfies  
	$
	\hphi(\alpha) = \frac{\phi'(\alpha)}{\phi'(1)}$,
	which is consistent with our definition in \eqref{nr_gen_func}. When $q=1$, all vertices are in the Karp-Sipser core by definition. However, when a small proportion of degree-$1$ vertices is introduced into the graph, the normalized size of the Karp-Sipser core may behave quite differently depending on the value of $p$: for some choices of $p$, the core size remains almost unchanged, whereas for others it vanishes abruptly. The latter scenario occurs for intermediate choices of $p$. 
	To reduce the core size, the Karp–Sipser algorithm requires, on the one hand, that small-degree vertices can be easily transformed into leaves, and, on the other hand, that high-degree vertices, once they become neighbors of a leaf, cause the removal of many edges, thereby creating additional leaves.
	\section{Overview}\label{Sec_Overview}
	This section provides an overview of the proofs of \Thm~\ref{thm_KSRT}. 
	We state some intermediate results with the help of fixed point argument and Warning Propagation that lead to the main theorem.
	In the final paragraph, we conclude with a discussion of further related work.
	We also detail where in the following sections the proofs of these intermediate results can be found.
	Throughout we assume $p_1>0$ and $\sum_{k\geq 0} k(k-2)p_k > 0$ and the random graph model $\mathcal{G} (V,E)$ follows our configuration model $\CM_n$. 
	
	\subsection{Fixed Point analysis}
	In this section we will analyze the fixed points of $\zeta(\alpha)$ in $[0,1]$ i.e., the solutions to the equation
	\begin{align}\label{zeta_eq}
		\zeta=\zeta(\alpha)=\alpha+\ph(1-\ph(\alpha))-1=0
	\end{align}
	Equivalently, we set
	\begin{align}\label{xi_func}
		\xi=\xi(\alpha)=1-\hphi(1-\hphi(\alpha))
	\end{align}
	the fixed point equation is given by, $\xi(\alpha)-\alpha=0$, where, $\alpha\in[0,1]$.
	So, the fixed point of $\xi$ coincides with the solutions of $\zeta(\alpha)=0$.
	Below, we will establish the existence, uniqueness or multiplicity of the solutions and their stability by exploiting the convexity and monotonicity properties of $\hphi$ and the routine calculus as well as the standard branching process fixed-point arguments.
	Figure~\ref{fig_fixed-p-xi} shows the solution to equation $\xi(\alpha)-\alpha=0$ or equivalently $\zeta(\alpha)=0$ with different values of $p$-value.  
	
	\begin{lemma}\label{zeta_func_existence}
		Under the assumption $p_1>0$, we have $\zeta(0)<0<\zeta(1)$ and hence by continuity there exists at least one $\alpha_s\in (0,1)$ with $\zeta(\alpha_s)=0$ such that,
		\begin{align*}
			\alpha_s+\hphi(1-\hphi(\alpha_s))=1.
		\end{align*}
	\end{lemma}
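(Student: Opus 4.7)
The plan is to apply the intermediate value theorem to $\zeta$ on $[0,1]$; all substance lies in verifying the signs at the two endpoints, and both reduce to elementary properties of the size-biased generating function $\hphi$ defined in \eqref{nr_gen_func}.

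First I would evaluate at $\alpha=1$. Since $\hphi(1)=1$, direct substitution gives $\zeta(1)=\hphi(0)=p_1/\sum_{k\geq 1}kp_k$, which is strictly positive because $p_1>0$ by hypothesis while the denominator $\Erw[\vD]$ is finite and positive by \eqref{CM_first_moment}. Next I would evaluate at $\alpha=0$: one has $\zeta(0)=\hphi(1-\hphi(0))-1$, and $1-\hphi(0)$ lies in $[0,1)$ because $\hphi(0)>0$ by the same computation. Now $\hphi$, being a power series with non-negative coefficients satisfying $\hphi(1)=1$, is non-decreasing on $[0,1]$; to upgrade this to strict monotonicity at the relevant point I would invoke the standing assumption $\sum_{k\geq 0}k(k-2)p_k>0$, which forces some $p_k>0$ with $k\geq 3$ (the summands for $k\in\{0,1,2\}$ being non-positive). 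Consequently $\hphi$ is strictly increasing on $[0,1]$, so $\hphi(1-\hphi(0))<\hphi(1)=1$, giving $\zeta(0)<0$.

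Continuity of $\zeta$ on $[0,1]$ is immediate from that of $\hphi$, so the intermediate value theorem supplies some $\alpha_s\in(0,1)$ with $\zeta(\alpha_s)=0$, which rearranges to the claimed identity $\alpha_s+\hphi(1-\hphi(\alpha_s))=1$. There is no substantive obstacle here: the only care needed is to ensure that both endpoint inequalities are strict, which traces back to $p_1>0$ (yielding simultaneously $\zeta(1)>0$ and $1-\hphi(0)<1$) and to the branching-type condition $\sum_{k\geq 0}k(k-2)p_k>0$ (yielding strict monotonicity of $\hphi$, and hence $\zeta(0)<0$).
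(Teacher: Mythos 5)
Your proof is correct and follows essentially the same route as the paper: compute $\zeta(1)=p_1/\lambda>0$ and $\zeta(0)=\hphi(1-p_1/\lambda)-1<0$, then apply the intermediate value theorem. If anything, you are slightly more careful than the paper, which simply asserts that $\hphi$ is strictly increasing, whereas you correctly trace this back to the standing assumption $\sum_{k\geq 0}k(k-2)p_k>0$ forcing some $p_k>0$ with $k\geq 3$ (without which, e.g.\ all mass on degree~$1$, $\hphi\equiv 1$ and $\zeta(0)=0$).
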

	Analogously, the map $\xi(\alpha)=1-\hphi(1-\hphi(\alpha))$ has at least one fixed point $\alpha_s\in (0,1)$ of the equation $\xi(\alpha)-\alpha=0$.

	\begin{proof}
		Let $\lambda=\phi'(1)$. Note that function $\zeta(\alpha)$ is differentiable and continuous on $(0,1)$. Then, 
		\begin{align}\label{zeta_func_zero_one}
			\zeta(0)&=\hphi(1-\hphi(0))-1=\hphi(1-p_1/\lambda)-1,\\
			\zeta(1)&=1+\hphi(1-\hphi(1))-1=p_1/\lambda
		\end{align}
		Since $\hphi(\alpha)$ is strictly increasing, $\hphi(1-p_1/\lambda)<\hphi(1)=1$ and also $p_1/\lambda \ge 0$. 
		Therefore, $\zeta(0)<0$ and $\zeta(1)>0$.
		So, the number of zeros must be odd.
		By the intermediate value theorem, there exists some $\alpha_s\in (0,1)$ such that $\zeta(\alpha_s)=0$.
		Equivalently, the map $\xi(\alpha)=1-\hphi(\alpha)$ also has at least one fixed point $\alpha_s \in (0,1)$.
		Therefore, $\xi(\alpha_s)=\alpha_s$.
	\end{proof}
	So, the above proposition tells us that when the leaves are present in the model, we always have a non-trivial fixed point. 
	Thus, an obvious key question is to find the number of such solutions and identify which one is relevant.
	In the remaining section it is convenient to work with the iteration map $\xi(\alpha)$ defined in \eqref{xi_func}, so that the fixed points of $\xi$ coincides with the stationary points of $\zeta(\alpha)=0$.
	
	The next \Prop~\ref{zeta_func_noffixpts_stab} counts the number of fixed points of \eqref{zeta_eq} and provides the detailed analysis on stability of those fixed points. 
	\begin{proposition}\label{zeta_func_noffixpts_stab}	
		Let \( \aa \) and \( \ab \) denote the smallest and largest solutions of the function \( \zeta(\alpha) \), respectively. 
		If \( \hphi'(\aa) \hphi'(\ab) \ne 1 \), then both \( \aa \) and \( \ab \) are stable fixed points of the function \( \xi \) and $\xi'(\alpha)=\hphi'(\aa) \hphi'(\ab)\in(0,1)$ for $\alpha\in \{\aa,\ab\}$. 
	\end{proposition}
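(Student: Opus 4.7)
The plan is to exploit the compositional form of $\xi(\alpha)=1-\hphi(1-\hphi(\alpha))$ together with a hidden involution on its fixed-point set, and then read off linear stability from a sign analysis of $\zeta(\alpha)=\alpha-\xi(\alpha)$ at its extremal zeros. First I would introduce the auxiliary map $\tau(\alpha)=1-\hphi(\alpha)$ on $[0,1]$. Since $\hphi$ is strictly increasing on $[0,1]$ (the giant-core assumption $\sum_{k\geq 0}k(k-2)p_k>0$ forces some coefficient $\hat p_k$ with $k\geq 2$ to be strictly positive), $\tau$ is strictly decreasing. A direct substitution gives $\tau\circ\tau=\xi$, so $\tau$ restricts to an involution on the fixed-point set $F$ of $\xi$, and being strictly decreasing it reverses the order on $F$. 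Since $F$ is non-empty by \Lem~\ref{zeta_func_existence}, this forces $\tau(\aa)=\ab$ and $\tau(\ab)=\aa$.

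Once this symmetry is in hand, the chain rule yields $\xi'(\alpha)=\hphi'(\alpha)\hphi'(\tau(\alpha))$. Evaluating at the two extremal fixed points and using $\tau(\aa)=\ab$ gives the first claim of the proposition:
\[
\xi'(\aa)=\hphi'(\aa)\hphi'(\ab)=\xi'(\ab).
\]
For the upper bound $\xi'<1$, I would use that $\aa$ is the smallest zero of $\zeta$ while $\zeta(0)<0$: $\zeta$ is nonpositive on $[0,\aa]$ and vanishes at $\aa$, forcing $\zeta'(\aa)\geq 0$, i.e.\ $\xi'(\aa)\leq 1$. The symmetric argument on $[\ab,1]$, using $\zeta(1)>0$, gives $\xi'(\ab)\leq 1$. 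The genericity hypothesis $\hphi'(\aa)\hphi'(\ab)\neq 1$ rules out equality, so both derivatives are strictly less than $1$.

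For the lower bound $\xi'>0$, $\hphi$ has nonnegative Taylor coefficients, hence $\hphi'\geq 0$ on $[0,1]$, and by the same positivity used above $\hphi'(\alpha)>0$ for every $\alpha>0$. Since $\aa,\ab>0$ by \Lem~\ref{zeta_func_existence}, this yields $\xi'(\aa)=\xi'(\ab)>0$. Combining the two bounds, $\xi'\in(0,1)\subset(-1,1)$ at both extremal fixed points, which is the standard linear-stability criterion for the iteration $\alpha\mapsto\xi(\alpha)$.

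The main conceptual obstacle is spotting the involution $\tau$: without it there is no a priori reason the derivatives at the two extremal fixed points should couple into the single symmetric product $\hphi'(\aa)\hphi'(\ab)$. Once $\tau$ is identified, the remaining ingredients -- sign analysis of $\zeta$ at its extremal zeros and positivity of $\hphi'$ on $(0,1]$ -- are essentially routine, and the role of the hypothesis $\hphi'(\aa)\hphi'(\ab)\neq 1$ is precisely to exclude a tangential (degenerate) crossing of $\zeta$ at $\aa$ or $\ab$.
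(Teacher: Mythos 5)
Your proposal is correct and takes essentially the same route as the paper: your order-reversing involution $\tau(\alpha)=1-\hphi(\alpha)$ with $\tau\circ\tau=\xi$ is precisely the content of the paper's duality lemma (\Lem~\ref{lem:solutions-relation-zeta}), which establishes $\aa=1-\hphi(\ab)$ and $\ab=1-\hphi(\aa)$ by the same apply-the-map-twice-and-use-monotonicity argument. The rest matches the paper's proof step for step --- the chain-rule identity $\xi'(\aa)=\xi'(\ab)=\hphi'(\aa)\hphi'(\ab)$, the sign analysis $\zeta'(\aa)\geq 0$ from $\zeta(0)<0$ and minimality of $\aa$, strict positivity of $\hphi'$ at the interior fixed points, and the hypothesis $\hphi'(\aa)\hphi'(\ab)\neq 1$ to exclude the tangential case --- with your separate check at $\ab$ being redundant (the two derivatives coincide) and your explicit justification that $\hphi'>0$ on $(0,1]$ being a slightly more careful version of the paper's bare assertion that $\hphi'(\aa)\hphi'(\ab)>0$.
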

	
	\subsection{Warning Propagation}\label{WP}
	One of the most important tools is an enhanced version of the Warning Propagation (WP) message passing algorithm that helps us to analyze the size of the Karp-Sipser core. 
	In general Warning Propagation applied to a graph $\mathcal{G}(V,E)\sim\CM_n$ consists of two type of messages $\mu_{u\to v}$ and $\mu_{v\to u}$ for each edge $(uv)$ of $\mathcal{G}$.
	These messages can take values in a finite alphabet $\Omega$.
	Moreover let $\mathcal{W}(\mathcal{G})$ be the set of all vectors
	\begin{align}\label{WP_vec}
		\bigl(\mu_{u\to v}\bigr)_{(u,v)\in V(\mathcal{G})^{2}:uv\in E(\mathcal{G})} \in \Omega^{2\abs{E(\mathcal{G})}}
	\end{align}
	Based on some fixed update rules the messages get updated in parallel.
	Let $\left(\binom{\Omega} d\right)$ be the set of all $d$-ary message sets with elements from $\Omega$ for $d\in \mathbb{N}$.
	Then given any input message sets the update rule $\varphi$ computes an output message and is given by,
	\begin{align}\label{WP_operator}
		\varphi:\bigcup_{d\ge 0} \left(\binom{\Omega} d\right) \to \Omega
	\end{align}
	Thus we define the WP operator on $\mathcal{G}(V,E)\sim\CM_n$ by,
	\begin{align}
		&{WP}_{\mathcal{G}}:\mathcal{W}(\mathcal{G})\to \mathcal{W}(\mathcal{G}) \nonumber \\
		&\mu=\left(\mu_{u\to v}\right)_{(uv)}\mapsto 
		\left(\varphi\left(\{\mu_{w\to u}:(u,w)\in E(\mathcal{G}),w\neq v\}\right)\right)_{(uv)} \label{WP_operator1}
	\end{align}
	So, updating the message from $u$ to $v$ we apply the update rule $\varphi$ to the messages $u$ receives from its other neighbors $w\neq v$.
	This approach exploits message-passing stability properties in locally tree-like structures and applies directly to configuration models satisfying Condition~\ref{CM_cond}.
	\subsubsection{\textbf{WP Update rules}}
	Let $\mathcal{G}(V,E) \sim \CM_{n}(d^{(n)})$ be a random graph drawn from the configuration model with degree sequence $d^{(n)}$.
	We denote the message from node $u$ to node $v$ at time $t$ as $\msg{u}{v}{t}$.
	Further a message $\msg{u}{v}{t}=\textcolor{green}{\textbf{U}}$ indicates that the node $u$ is in the Karp-Sipser core due to unmatched of its neighbor and survives after a few iterations of the leaf removal process.
	On the other hand if the message $\msg{u}{v}{t}=\textcolor{blue}{\textbf{M}}$, then it indicates that the node $u$ is not in the core due to matched as unique neighbors of a leaf node.
	
	Our enhanced WP algorithm associates a pair of $\{\textcolor{red}{\textbf{L}},\textcolor{blue}{\textbf{M}},\textcolor{green}{\textbf{U}}\}$-valued messages with every edge of $\mathcal{G}\sim\CM_{n}$.
	Hence, $\mathcal{W(G)}$ be the set of all vectors defined in \eqref{WP_vec} with entries $\mu_{u\to v}=\{\textcolor{red}{\textbf{L}},\textcolor{blue}{\textbf{M}},\textcolor{green}{\textbf{U}}\}$.
	At time $t=0$, we initialize all the messages $\msg{u}{v}{0} = \textcolor{green}{\textbf{U}}$ for all $(u,v)\in E(\mathcal{G})$. 
	\begin{definition}\label{def:core-wp}   
		Throughout the rest of this paper, we define
		\begin{align*}
			\varphi\left(\{\mu_{w\to u}:(u,w)\in E(\mathcal{G}),w\neq v\}\right) =\begin{cases}
				\msgL& \text{if }\ \forall w: \mu_{w\to u} =\msgM\\
				\msgM & \text{if } \abs{\cbc{w: \mu_{w\to u}= \msgL}}\geq 1\\
				\msgU & \text{otherwise.}
			\end{cases}.    
		\end{align*}
		And we define
		\begin{equation}\label{WP_update_rule}
			\mu^{(t+1)}_{u\to v}=\varphi\left(\{\mu^{(t)}_{w\to u}:(u,w)\in E(\mathcal{G}),w\neq v\}\right).
		\end{equation}
	\end{definition}

	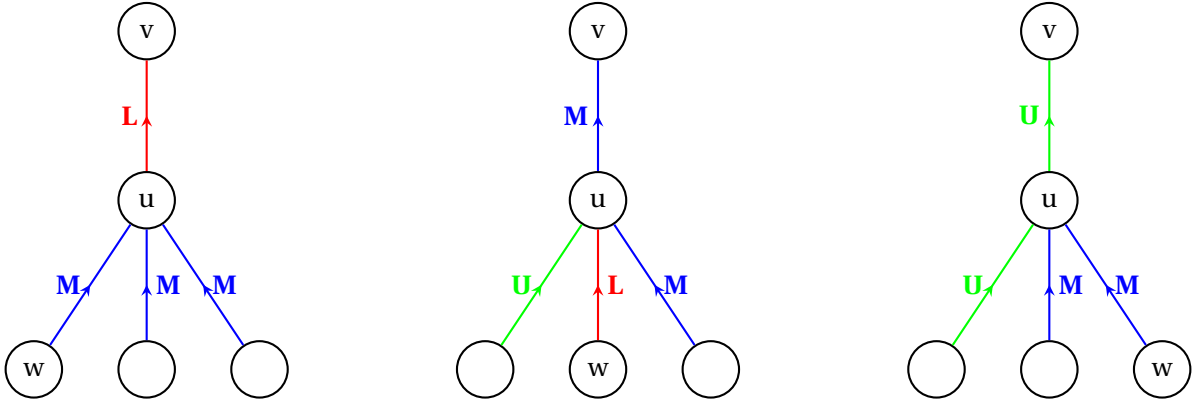
\begin{figure}[t!]
		\centering
		\begin{tikzpicture}[
			L/.style={circle, draw, minimum size=7.5mm, inner sep=0pt, text=black,line width=0.8 pt},
			midarrow/.style={
				postaction={decorate},
				decoration={
					markings,
					mark=at position 0.5 with {\arrow{stealth}}
				}
			}
			]
			
			\node[L] (L1) at (0,4.5) {v};
			\node[L] (L2) at (0, 2.25) {u};
			\node[L] (L3) at (-1.5, 0) {w};
			\node[L] (L4) at (0, 0) {};
			\node[L] (L5) at (1.5, 0) {};
			\draw[red,thick,midarrow] (L2) -- (L1) node[midway,left]
			{\msgL};
			\draw[blue, thick, midarrow] (L3) -- (L2) node[midway, left] {\msgM};
			\draw[blue, thick, midarrow] (L4) -- (L2) node[midway, right] {\msgM};
			\draw[blue, thick, midarrow] (L5) -- (L2) node[midway, right] {\msgM};
			
			\node[L] (L6) at (6,4.5) {v};
			\node[L] (L7) at (6, 2.25) {u};
			\node[L] (L8) at (4.5, 0) {};
			\node[L] (L9) at (6, 0) {w};
			\node[L] (L10) at (7.5, 0) {};
			\draw[blue,thick,midarrow] (L7) -- (L6) node[midway,left]
			{\msgM};
			\draw[green, thick, midarrow] (L8) -- (L7) node[midway, left] {\msgU};
			\draw[red, thick, midarrow] (L9) -- (L7) node[midway, right] {\msgL};
			\draw[blue, thick, midarrow] (L10) -- (L7) node[midway, right] {\msgM};
			
			\node[L] (L11) at (12,4.5) {v};
			\node[L] (L12) at (12, 2.25) {u};
			\node[L] (L13) at (10.5, 0) {};
			\node[L] (L14) at (12, 0) {};
			\node[L] (L15) at (13.5, 0) {w};
			\draw[green,thick,midarrow] (L12) -- (L11) node[midway,left]
			{\msgU};
			\draw[green, thick, midarrow] (L13) -- (L12) node[midway, left] {\msgU};
			\draw[blue, thick, midarrow] (L14) -- (L12) node[midway, right] {\msgM};
			\draw[blue, thick, midarrow] (L15) -- (L12) node[midway, right] {\msgM};

		\end{tikzpicture}
		\caption{A snapshot of Warning Propagation update rules defined in \eqref{WP_update_rule}}
		\label{fig_WP_update}
	\end{figure}
	This rule reflects the core's stability condition: a vertex $u$ send a message $\msgU$ to $v$ only if atleast one of its other neighbors $w \in \partial u \setminus \{v\}$ send message $\msgU$ to $u$ and no $\msgL$ message is received by vertex $v$ from $u$ at time $t$.
	In other words, if node $u$ is a leaf or has any neighbor that is not in the core, it must also leave the core. 
	Hence, 
	\begin{itemize}
		\item A message $\msgL$ from $u$ (to $v$) indicates that $u$ will be removed by Algorithm~\ref{alg_karpsipser} as a leaf and therefore will not belong to the Karp–Sipser core.  
		\item A message $\msgM$ from $u$ indicates that $u$ will be removed by Algorithm~\ref{alg_karpsipser} as a neighbor of a leaf and therefore will not belong to the Karp–Sipser core.  
		\item A message $\msgU$ from $u$ indicates that $u$ remains a possible candidate for the Karp–Sipser core.
	\end{itemize}
	
	\begin{remark}\label{lem_hierarchy}
		To describe the dynamics of the message update, let us put a non-numerical hierarchy to the types of messages as follows:
		\[\msgL<\msgM<\msgU\]
	\end{remark}
	
	In most of the applications of Warning Propagation (WP) the update rule $\varphi$ defined in $\eqref{WP_operator}$ enjoys a monotonicity property which ensures that the point-wise limit ${WP}^{\kll}_{\mathcal{G}}\left(\mu^{(0)}\right):=\lim_{t\to\infty}{WP}^{(t)}_{\mathcal{G}}\left(\mu^{(0)}\right)$
	exists for any graph $\mathcal{G}$ with any initialization $\mu^{(0)} \in \mathcal{W(G)}$. 
	\begin{fact}\label{lem_WP_convergence}
		Let $\mathcal{G}(V,E)\sim\CM_{n}$ be any finite undirected graph, and let $(\mu^{(t)}_{u \to v})_{(u,v) \in E}$ be the sequence of WP messages defined by the update rule in \eqref{WP_update_rule} with
		initialization $\mu^{(0)}_{u\to v}=\msgU$. Then for all $(u,v)\in E$, the message sequence is non-increasing:
		\[
		\mu^{(t+1)}_{u \to v} \leq \mu^{(t)}_{u \to v},
		\]
		and converges in at most $|E|$ steps to a fixed point $\mu^{(\infty)}_{u \to v} \in \cbc{\msgL, \msgM, \msgU}$.  
	\end{fact}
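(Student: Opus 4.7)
The plan is to prove monotonicity by a single strong induction on $t$ and then deduce convergence from the finiteness of the message alphabet. The base case $t=0$ is automatic because $\mu^{(0)}_{u\to v}=\msgU$ is the top element of the hierarchy $\msgL<\msgM<\msgU$, so $\mu^{(1)}_{u\to v}\le\mu^{(0)}_{u\to v}$ holds trivially for every directed edge. For the inductive step I case-split on $\mu^{(t)}_{u\to v}$. The subcase $\msgU$ is immediate; the subcase $\msgM$ supplies, by \Cref{def:core-wp}, a neighbor $w\in\partial u\setminus\{v\}$ with $\mu^{(t-1)}_{w\to u}=\msgL$, and the induction hypothesis on $(w,u)$ forces $\mu^{(t)}_{w\to u}=\msgL$, so the update for $\mu^{(t+1)}_{u\to v}$ sees at least one $\msgL$ input and returns $\msgM\le\mu^{(t)}_{u\to v}$.

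The delicate subcase, and the main obstacle I expect, is $\mu^{(t)}_{u\to v}=\msgL$. After disposing of the vacuous situation $\partial u\setminus\{v\}=\emptyset$, this value forces every $w\in\partial u\setminus\{v\}$ to satisfy $\mu^{(t-1)}_{w\to u}=\msgM$, and I must ensure that none of these drops further to $\msgL$ at time $t$, since a single $\msgL$ input would incorrectly push the next message back up to $\msgM$ and break the induction. The workaround is a two-step certificate: $\mu^{(t-1)}_{w\to u}=\msgM$ guarantees some $z\in\partial w\setminus\{u\}$ with $\mu^{(t-2)}_{z\to w}=\msgL$, and by induction this persists to $\mu^{(t-1)}_{z\to w}=\msgL$, so the update producing $\mu^{(t)}_{w\to u}$ has a $\msgL$ among its inputs and returns $\msgM$ rather than $\msgL$. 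Summing over $w$ shows that all inputs to the rule for $\mu^{(t+1)}_{u\to v}$ equal $\msgM$, whence $\mu^{(t+1)}_{u\to v}=\msgL$ and monotonicity is preserved.

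The conceptual point behind this obstacle is that the map $\varphi$ is not pointwise monotone on arbitrary triples of inputs, as can be seen by comparing $\varphi(\msgM,\msgM)=\msgL$ with $\varphi(\msgL,\msgM)=\msgM$; its monotonicity manifests only along trajectories initialized at $\msgU$, and the proof must exploit that every $\msgM$ seen along such a trajectory is witnessed by a persistent $\msgL$ produced one step earlier. Once monotonicity is established, convergence is immediate: the state space $\{\msgL,\msgM,\msgU\}^{2|E|}$ is finite and each directed message undergoes at most two strict drops in the chain $\msgU\to\msgM\to\msgL$. A potential function counting the total progress along this chain decreases by at least one in every non-stationary round and is bounded by a constant multiple of $|E|$, so the iteration stabilizes at a fixed point $\mu^{(\infty)}_{u\to v}\in\{\msgL,\msgM,\msgU\}$ within $O(|E|)$ rounds, matching the advertised bound.
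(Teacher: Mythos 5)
Your proof is correct, and it is in fact more careful than the paper's own appendix argument, though both share the same skeleton: induction on $t$ for monotonicity, followed by a count of value changes for convergence. The genuine content you add is the treatment of the case $\mu^{(t)}_{u\to v}=\msgL$. As you observe, $\varphi$ is not pointwise monotone in its inputs (an all-$\msgM$ input set yields $\msgL$, while lowering one input to $\msgL$ raises the output to $\msgM$), so the naive step ``inputs decrease, hence the output decreases'' is invalid; the paper's inductive step (``the condition for returning to $\msgL$ or $\msgM$ is met sooner than before'') glosses over precisely this point. Your two-step certificate --- every $\msgM$ on an edge $(w,u)$ at time $t-1$ is witnessed by some $z\in\partial w\setminus\{u\}$ with $\mu^{(t-2)}_{z\to w}=\msgL$, which persists by the induction hypothesis and pins $\mu^{(t)}_{w\to u}$ at $\msgM$ --- closes this gap rigorously, and as a by-product shows that along the all-$\msgU$ trajectory both $\msgM$ and $\msgL$ are absorbing, so each directed message changes value at most \emph{once}, strengthening the paper's ``at most twice along $\msgU\to\msgM\to\msgL$'' (that chain in fact never occurs from this initialization). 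One caveat: your potential-function argument yields stabilization within $O(|E|)$ rounds (explicitly $4|E|$, or $2|E|$ with your absorbing-state refinement), which does not literally match the stated bound of at most $|E|$ steps, so ``matching the advertised bound'' is an overstatement; the paper's own derivation of the exact constant, via the longest chain in a causal DAG, is equally loose, and nothing downstream depends on it --- \Cref{thm_WP_Joon} only needs a convergence time independent of $n$ --- but you should either sharpen the causal-chain count or state your conclusion as $O(|E|)$.
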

	More details of the monotonicity and convergence of messages $\mu^{(t)}_{u \to v}$ can be found in appendix section.
	
	\subsubsection{\textbf{WP Stability}}
	In addition to the proof of our main theorem, we need to define some more definitions along with the assumptions we require for our configuration model $\mathcal{G}\sim\CM_n$ to satisfy.
	As our goal is to study the fixed points of Warning Propagation on our model $\CM_n$ and particularly, the rate of convergence on the corresponding Galton-Watson limiting tree, we are going to prove that under the assumptions of some stability condition on the update rule the WP fixed point can be characterized in terms of local structure only (for more details refer to \Sec~\ref{Sec_CWP}).
	To this end, we need to define a suitable notion of WP fixed point on a random rooted tree.
	Thanks to the recursive nature of the Galton-Watson tree for helping us to take a convenient shortcut instead of taking actual measure-theoretic gymnastics by defining measurable spaces with infinite points.
	Here, our fixed points are just a probability distribution on $\Omega$ such that, if an offspring of a node $u\in V(\mathcal{G})$ in the Galton-Watson rooted tree sends messages independently according to this distribution, then the messages from $u$ to its parent $v$ will have the same distribution, but this is not generally true in the case of our model $\mathcal{G}\sim\CM_n$.
	The following definition formalizes the idea:
	\begin{definition}\label{def_WP_stability_first}
		Let $\Omega = \{\omega_1, \ldots, \omega_r\}$, and let 
		$\Delta = (y_1, \ldots, y_r)$
		be a probability distribution vector on $\Omega$, where $y_i$ denotes the probability of $\omega_i$ for $i = 1, \ldots, r$.
		, and let 
		$\varphi: \bigcup_{d\geq 0} \binom{\Omega}{d} \to \Omega$
		be a WP update rule. Given a degree distribution $(p_k)_{k\geq 0}$, let $\widehat{\vD}$ be a random variable with the size-biased degree distribution $(\biasp_{k-1})_{k\geq 1}$. $(\vx_i)_{i\geq 1}$ be i.i.d.~random variables follows distribution $\Delta$. We define $\Upsilon_{\varphi}=\bc{\PP\bc{\varphi(\vx_1,\ldots,\vx_{\widehat{\vD}})=\omega_i}}_{1\leq i\leq r}$ and let $f^t$ denote the $t^{\text{th}}$ iterated function of $f$.
		\begin{itemize}
			\item We say that $\Delta$ is a \emph{WP limit} if $\Upsilon_{\varphi}(\Delta) = \Delta$.
			\item We say that $\Delta$ is a \emph{stable WP limit} (with respect to a distance) if it is a WP limit and $\Upsilon_{\varphi}$ is a contraction on a neighborhood of $\Delta$ under this distance.
			\item We say that $\Delta$ is the \emph{WP limit of a probability distribution vector} $\Delta_{0}$ on $\Omega$ if $\Delta$ is a WP limit and, moreover, 
			$\lim_{t\to\infty} \Upsilon^t_{\varphi}(\Delta_0)=\Delta$.
		\end{itemize}
	\end{definition}
	
	The following lemma provides the explicit form of the function $\Upsilon_{\varphi}$ for our choice of $\varphi$ in \Cref{def:core-wp}.
	\begin{lemma}\label{lem_WP_phi}
		Given a probability distribution $(p_k)_{k\geq 1}$ and $\hphi(q)=\sum_{k\ge 1}\widehat{p}_{k}q^{k-1}$ with edge-biased distribution $(\hat{p}_{k-1})_{k\geq 1}$ from \eqref{edge_biased_dist}, the Warning Propagation update function $\varphi$ on three tuples $\{\msgL,\msgM,\msgU\}$ we have 
		\begin{align}\label{WP_update_phi}
			\Upsilon_{\varphi}\bc{q_{\msgL},q_{\msgM},q_{\msgU}}=\left(\hphi(q_{\msgM}),1-\hphi(q_{\msgM}+q_{\msgU}),\hphi(q_{\msgM}+q_{\msgU})-\hphi(q_{\msgM})\right)
		\end{align}    
	\end{lemma}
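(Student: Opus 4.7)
The plan is a direct conditional computation on the value of $\widehat{\vD}$ combined with the independence of $(\vx_i)_{i\geq 1}$. First I would record the generating-function identity $\hphi(\alpha) = \sum_{d\geq 0}\PP(\widehat{\vD}=d)\,\alpha^d = \Erw[\alpha^{\widehat{\vD}}]$, which follows immediately from \eqref{nr_gen_func} and \eqref{edge_biased_dist} after the index shift $d=k-1$. The useful consequence is that for any $\alpha\in[0,1]$, $\hphi(\alpha)$ is exactly the probability that $\widehat{\vD}$ independent trials with individual success probability $\alpha$ all succeed.

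Next I would partition $\{\varphi(\vx_1,\ldots,\vx_{\widehat{\vD}})=\omega\}$ for each $\omega\in\{\msgL,\msgM,\msgU\}$ by inspecting the rule in \Cref{def:core-wp}. The event $\{\varphi=\msgL\}$ coincides with ``every $\vx_i$ equals $\msgM$,'' vacuously true when $\widehat{\vD}=0$, so by the identity above it has probability $\Erw\brk{q_\msgM^{\widehat{\vD}}}=\hphi(q_\msgM)$. The event $\{\varphi=\msgM\}$ coincides with ``at least one $\vx_i$ equals $\msgL$''; taking complements and using $q_\msgL+q_\msgM+q_\msgU=1$ gives probability $1-\hphi(1-q_\msgL)=1-\hphi(q_\msgM+q_\msgU)$. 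The remaining mass, assigned to $\msgU$, is then obtained by subtraction (equivalently, by inclusion--exclusion between ``no $\vx_i=\msgL$'' and ``all $\vx_i=\msgM$''), yielding $\hphi(q_\msgM+q_\msgU)-\hphi(q_\msgM)$. Collecting the three values in the order $(\msgL,\msgM,\msgU)$ reproduces \eqref{WP_update_phi}.

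There is no substantive obstacle; the only subtlety worth flagging explicitly is the empty-input case $\widehat{\vD}=0$, in which the first clause ``$\forall w:\mu_{w\to u}=\msgM$'' of the update rule is vacuously satisfied and $\varphi$ outputs $\msgL$. This matches the $d=0$ term $\PP(\widehat{\vD}=0)$ contributed to $\hphi(q_\msgM)$, so no separate casework is required and the three probabilities automatically sum to one by construction, confirming that $\Upsilon_\varphi$ is well-defined on the probability simplex.
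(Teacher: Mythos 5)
Your proof is correct and follows essentially the same route as the paper's: both condition on the size-biased degree and compute the probabilities of the three disjoint events ``all inputs equal $\msgM$'', ``at least one input equals $\msgL$'', and the remainder (no $\msgL$, at least one $\msgU$). The only difference is cosmetic---where the paper expands explicit multinomial sums over $(a,b,c)$, you evaluate the same events directly via $\hphi(\alpha)=\Erw\brk{\alpha^{\widehat{\vD}}}$ and complementation, and your remark on the vacuous case $\widehat{\vD}=0$ corresponds exactly to the $k=1$ term of the paper's sums.
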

	The following ~\Lem~\ref{lem_WP_limit} emphasizes the WP limit concerning the initial probability distribution $\Delta_0$.
	
	\begin{lemma}\label{lem_WP_limit}
		For the limiting probability distribution vector $\Delta$ and the initial distribution $\Delta_0$ with $\Delta=\left(1-\alpha^*,\alpha_*,\alpha^*-\alpha_*\right)$ and $\Delta_0=\left(0,0,1\right)$, $\Delta$ is the WP limit of $\Delta_0$ i.e., 
		\begin{align*}
			\lim_{t\to\infty}\Upsilon_{\varphi}^t(\Delta_0)=\Delta.
		\end{align*}
	\end{lemma}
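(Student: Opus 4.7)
My plan is to recast the iteration $\Delta_{t+1} = \Upsilon_\varphi(\Delta_t)$ as two scalar iterations of the map $\xi$ from \eqref{xi_func}, and then read off the limit from the extremal fixed-point structure of $\xi$ established in \Lem~\ref{zeta_func_existence} and \Prop~\ref{zeta_func_noffixpts_stab}.

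First, I parametrize $\Delta_t = (q_{\msgL}^{(t)}, q_{\msgM}^{(t)}, q_{\msgU}^{(t)})$ by $\alpha_t := q_{\msgM}^{(t)} + q_{\msgU}^{(t)} = 1 - q_{\msgL}^{(t)}$ and $\beta_t := q_{\msgM}^{(t)}$, so $\Delta_t = (1 - \alpha_t, \beta_t, \alpha_t - \beta_t)$. Plugging this parametrization into the explicit formula in \Lem~\ref{lem_WP_phi} collapses the three-coordinate recursion to the coupled scalar system
\begin{equation*}
  \alpha_{t+1} = 1 - \hphi(\beta_t), \qquad \beta_{t+1} = 1 - \hphi(\alpha_t),
\end{equation*}
with initial data $\alpha_0 = 1$, $\beta_0 = 0$ inherited from $\Delta_0 = (0,0,1)$. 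Composing two consecutive steps decouples the two coordinates: $\alpha_{t+2} = 1 - \hphi(1 - \hphi(\alpha_t)) = \xi(\alpha_t)$, and symmetrically $\beta_{t+2} = \xi(\beta_t)$. A short induction, using $\hphi(1) = 1$ to obtain $\beta_1 = 0$, then identifies $\alpha_t = \xi^{\lceil t/2 \rceil}(1)$ and $\beta_t = \xi^{\lfloor t/2 \rfloor}(0)$.

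Next I would exploit the structure of $\xi$ on $[0,1]$. Writing $g(x) = 1 - \hphi(x)$, one has $\xi = g \circ g$; since $\hphi$ is nondecreasing, $g$ is nonincreasing, and therefore $\xi$ is continuous and nondecreasing on $[0,1]$. Its fixed-point set in $[0,1]$ coincides with the zero set of $\zeta$, a nonempty closed subset of $(0,1)$ whose minimum is $\alpha_*$ and whose maximum is $\alpha^*$. Monotonicity of $\xi$ together with $\xi(\alpha_*) = \alpha_*$ forces $\xi^k(0) \leq \alpha_*$ for every $k$; moreover the sequence is strictly increasing because $\xi(0) = 1 - \hphi(1 - \hphi(0)) > 0$, using $p_1 > 0$ to ensure $\hphi(0) > 0$. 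So $\xi^k(0)$ converges upward to a positive fixed point of $\xi$ in $(0, \alpha_*]$, which by minimality of $\alpha_*$ must equal $\alpha_*$. A symmetric argument starting from the endpoint $1$, where $\xi(1) = 1 - \hphi(0) < 1$, yields $\xi^k(1) \downarrow \alpha^*$. Hence $\alpha_t \to \alpha^*$ and $\beta_t \to \alpha_*$, which translates back to $\Delta_t \to (1 - \alpha^*, \alpha_*, \alpha^* - \alpha_*) = \Delta$. Passing to the limit in the one-step recursion yields, as a free byproduct, the compatibility identities $\alpha^* = 1 - \hphi(\alpha_*)$ and $\alpha_* = 1 - \hphi(\alpha^*)$, confirming that $\Delta$ is a genuine fixed point of $\Upsilon_\varphi$.

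The main subtlety is ensuring that the monotone limits really are the extremes $\alpha_*, \alpha^*$ rather than some intermediate fixed point that $\xi$ might admit. This is handled cleanly by combining monotonicity of $\xi$ with the extremal definitions: since $\xi$ leaves both $[0, \alpha_*]$ and $[\alpha^*, 1]$ invariant, iterates started at the endpoints $0$ and $1$ are cornered on the correct side of every intermediate fixed point, so their limits are forced to coincide with $\alpha_*$ and $\alpha^*$ respectively. The stability hypothesis $\hphi'(\alpha_*) \hphi'(\alpha^*) < 1$ from \Prop~\ref{zeta_func_noffixpts_stab} is equivalent to $\xi'(\alpha_*) = \xi'(\alpha^*) \in (0,1)$ and upgrades this monotone convergence to a local geometric contraction, which will be useful downstream for quantifying rates.
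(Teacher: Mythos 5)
Your proposal is correct and takes essentially the same route as the paper's proof: both collapse the vector recursion of \Cref{lem_WP_phi} to monotone scalar iteration of $\xi$ and identify the limits with the extremal fixed points via minimality/maximality. The only organizational difference is that the paper iterates just $q_{\msgM}^{(t)}=\xi^{\lfloor t/2\rfloor}(0)\uparrow\alpha_*$ and recovers the other coordinates through the duality $\alpha^*=1-\hphi(\alpha_*)$ imported from \Cref{lem:solutions-relation-zeta}, whereas you additionally run the downward iteration $\xi^k(1)\downarrow\alpha^*$ and obtain that duality as a byproduct (your claim that $\xi^k(0)$ is \emph{strictly} increasing is not fully justified by $\xi(0)>0$ alone, but nondecreasing suffices, so nothing breaks).
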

	To obtain the contraction property in the definition of a stable WP limit in \Cref{def_WP_stability_first}, we introduce the following metric on the space of probability distribution matrices, denoted by $\mathscr{D}(\cdot,\cdot)$. This metric is equivalent to the total variation distance $\dtv(\cdot,\cdot)$ on probability distributions.
	\begin{definition}\label{def_new_distance}
		Let the distance of two probability matrices $\Delta_1=(q_{\msgL},q_{\msgM},q_{\msgU})$ and $\Delta_2=(q'_{\msgL},q'_{\msgM},q'_{\msgU})$ with $q_{\msgL}+q_{\msgM}+q_{\msgU}=q'_{\msgL}+q'_{\msgM}+q'_{\msgU}=1$ on the same set as
		\begin{align*}
			\mathscr{D}(\Delta_1,\Delta_2) = \sqrt[4]{\frac{\hphi'(\aa)}{\hphi'(\ab)}} \abs{q_{\msgL} - q_{\msgL}'} + \sqrt[4]{\frac{\hphi'(\ab)}{\hphi'(\aa)}} \abs{q_{\msgM} - q_{\msgM}'}.
		\end{align*}
		Then the distance $\mathscr{D}(\Delta_1,\Delta_2)$ is equivalent to the total variation distance defined as
		\begin{align}\label{eq-eq-d}
			\dtv(\Delta_1, \Delta_2) = \frac{1}{2} \left( \abs{q_{\msgL} - q_{\msgL}'} + \abs{q_{\msgM} - q_{\msgM}'} + \abs{q_{\msgU} - q_{\msgU}'} \right) \leq \abs{q_{\msgL} - q_{\msgL}'} + \abs{q_{\msgM} - q_{\msgM}'}.
		\end{align}
	\end{definition}
	The following lemma illustrates that the WP update function $\Upsilon_{\varphi}$ is a contraction in the neighborhood of the $\Delta_0$. 
	\begin{lemma}\label{lem_WP_stability_limit}
		There exists $\delta>0$ such that for any constants $\varepsilon_1,\varepsilon_2\in \left(-\delta,\delta\right)$ the following satisfies with respect to distance $\mathscr{D}$.
		\begin{align*}
			\mathscr{D}(\Upsilon_{\varphi}(\Delta),\Upsilon_{\varphi}(\Delta+\left(\varepsilon_1,-\varepsilon_1-\varepsilon_2,\varepsilon_2\right)))
			\leq
			(1-\delta)\mathscr{D}(\Delta,\Delta+\left(\varepsilon_1,-\varepsilon_1-\varepsilon_2,\varepsilon_2\right))
		\end{align*}
	\end{lemma}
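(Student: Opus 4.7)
The plan is to use the assumption $\mu:=\hphi'(\aa)\hphi'(\ab)<1$ from \Prop~\ref{zeta_func_noffixpts_stab} by linearising $\Upsilon_{\varphi}$ at $\Delta$, computing its Jacobian there, and showing that the quartic-root weights defining $\mathscr{D}$ are calibrated exactly so that the weighted operator norm of this Jacobian equals the spectral radius $\sqrt{\mu}<1$. A second-order Taylor remainder, uniformly available because Condition~\ref{CM_cond}(iii) forces $\hphi$ to be a polynomial with $\|\hphi''\|_{\infty}<\infty$, then upgrades the linear contraction to a non-linear contraction on a neighbourhood of $\Delta$.

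\textbf{Step 1: reduce to two coordinates and compute the Jacobian.} A probability vector is determined by $(q_{\msgL},q_{\msgM})$, and the admissible perturbation $(\varepsilon_1,-\varepsilon_1-\varepsilon_2,\varepsilon_2)$ is exactly a tangent vector to the simplex (it sums to zero). Substituting $q_{\msgU}=1-q_{\msgL}-q_{\msgM}$ in \Cref{lem_WP_phi} yields the decoupled expressions
\[
\Upsilon_{\msgL}(q_{\msgL},q_{\msgM})=\hphi(q_{\msgM}),\qquad
\Upsilon_{\msgM}(q_{\msgL},q_{\msgM})=1-\hphi(1-q_{\msgL}),
\]
so $\Upsilon_{\msgL}$ depends only on $q_{\msgM}$ and $\Upsilon_{\msgM}$ only on $q_{\msgL}$. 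Evaluating derivatives at $\Delta=(1-\alpha^{*},\alpha_{*})$ gives the anti-diagonal Jacobian
\[
J=\begin{pmatrix}0 & \hphi'(\alpha_{*})\\ \hphi'(\alpha^{*}) & 0\end{pmatrix},
\]
whose eigenvalues are $\pm\sqrt{\mu}$.

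\textbf{Step 2: Taylor expansion of the perturbed update.} With $\tilde\Delta=\Delta+(\varepsilon_1,-\varepsilon_1-\varepsilon_2,\varepsilon_2)$, the mean-value theorem together with $\|\hphi''\|_{\infty}<\infty$ gives, uniformly for $(\varepsilon_1,\varepsilon_2)\in(-\delta,\delta)^{2}$,
\begin{align*}
\bigl|\Upsilon_{\msgL}(\tilde\Delta)-\Upsilon_{\msgL}(\Delta)\bigr|&=\hphi'(\alpha_{*})\,|\varepsilon_1+\varepsilon_2|+O(\delta^{2}),\\
\bigl|\Upsilon_{\msgM}(\tilde\Delta)-\Upsilon_{\msgM}(\Delta)\bigr|&=\hphi'(\alpha^{*})\,|\varepsilon_1|+O(\delta^{2}).
\end{align*}
Weighting by the constants $a,b$ of \Cref{def_new_distance} then yields
\[
\mathscr{D}\bigl(\Upsilon_{\varphi}(\Delta),\Upsilon_{\varphi}(\tilde\Delta)\bigr)=a\,\hphi'(\alpha_{*})\,|\varepsilon_1+\varepsilon_2|+b\,\hphi'(\alpha^{*})\,|\varepsilon_1|+O(\delta^{2}).
\]

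\textbf{Step 3: the weighted operator norm equals $\sqrt{\mu}$.} Because $|\varepsilon_1|$ and $|\varepsilon_1+\varepsilon_2|$ can be prescribed independently by choosing $\varepsilon_1,\varepsilon_2$, maximising the ratio of the previous display against $\mathscr{D}(\Delta,\tilde\Delta)=a|\varepsilon_1|+b|\varepsilon_1+\varepsilon_2|$ gives the weighted operator norm of $J$, which equals $\max\{a\hphi'(\alpha_{*})/b,\ b\hphi'(\alpha^{*})/a\}$. The quartic-root ratio in \Cref{def_new_distance} is precisely the Perron-type calibration $a/b=\sqrt{\hphi'(\alpha^{*})/\hphi'(\alpha_{*})}$ under which both of these quantities reduce to $\sqrt{\mu}$. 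Since $\mu<1$ by hypothesis, the linear contraction rate is strictly below one. Choosing $\delta$ small enough that the $O(\delta^{2})$ remainder is dominated by $(1-\sqrt{\mu}-\delta)\mathscr{D}(\Delta,\tilde\Delta)$ and then relabelling $\delta$ by the resulting positive gap produces the stated contraction.

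The main obstacle is the bookkeeping in Step~3: one has to verify that the specific choice of quartic-root weights really is the Perron eigendirection of the anti-diagonal matrix $J$, so that \emph{both} extreme directions $(\varepsilon_1,\varepsilon_2)\to (x,0)$ and $(\varepsilon_1,\varepsilon_2)\to (0,y)$ are contracted by the same factor $\sqrt{\mu}$ and no direction escapes the contraction. Once this algebraic identity is in hand, the rest is a routine linearisation around a hyperbolic fixed point.
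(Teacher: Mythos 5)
Your proposal is in essence the paper's own argument: both linearise $\Upsilon_{\varphi}$ at the fixed point $\Delta$ via the mean value theorem and use the quartic-root weights of $\mathscr{D}$ to turn the anti-diagonal Jacobian with entries $\hphi'(\aa),\hphi'(\ab)$ into a contraction at rate essentially $\sqrt{\hphi'(\aa)\hphi'(\ab)}<1$. The paper avoids an explicit Taylor expansion by bounding the derivative on a $\delta$-neighbourhood through the monotonicity of $\hphi'$ (the bounds $\hphi'(\aa+\delta)$, $\hphi'(\ab+\delta)$ in \eqref{eq:upbound-derivative}) and hard-coding the slack into the choice of $\delta$, but that is a cosmetic difference.

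Two repairs are needed. First, your absorption of the error term is misphrased: an \emph{additive} $O(\delta^{2})$ remainder cannot be dominated by $(1-\sqrt{\mu}-\delta)\,\mathscr{D}(\Delta,\tilde\Delta)$, since for fixed $\delta$ the perturbations $\eps_1,\eps_2$ — and with them $\mathscr{D}(\Delta,\tilde\Delta)$ — may be arbitrarily small. The remainder must be kept \emph{multiplicative} in the perturbation, e.g.
\begin{align*}
\abs{\hphi(\aa)-\hphi(\aa-\eps_1-\eps_2)}\leq \bc{\hphi'(\aa)+2\delta\norm{\hphi''}_{\infty}}\abs{\eps_1+\eps_2},
\end{align*}
after which it is absorbed coefficient-wise; this is exactly what the paper's $\hphi'(\aa+\delta)$ device accomplishes, so the fix is one line. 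Second, the weight orientation. Your Jacobian computation is the faithful one (the $\msgL$-output moves by $\approx\hphi'(\aa)\abs{\eps_1+\eps_2}$ and the $\msgM$-output by $\approx\hphi'(\ab)\abs{\eps_1}$), and it forces precisely the calibration $a/b=\sqrt{\hphi'(\ab)/\hphi'(\aa)}$ that you state — but this is the \emph{reciprocal} of the literal \Cref{def_new_distance}, which places $\sqrt[4]{\hphi'(\aa)/\hphi'(\ab)}$ on the $\msgL$-coordinate. Under the literal weights, the direction $\eps_2=-\eps_1$ expands by the factor $\sqrt{\hphi'(\ab)/\hphi'(\aa)}\,\hphi'(\ab)$, which exceeds $1$ for instance when $\hphi'(\aa)=1/2$ and $\hphi'(\ab)=3/2$ (where $\mu=3/4<1$), so no contraction holds. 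The paper's own display \eqref{eq:expand-dif-dne} exhibits the matching inconsistencies ($\eps_2$ where $\eps_1$ belongs, $\eps_1$ where $\eps_1+\eps_2$ belongs, and the weights attached to the opposite fixed points), so this is evidently a typo in the paper rather than an error of yours; still, you should state explicitly that you are using the swapped weights rather than asserting that your calibration coincides with \Cref{def_new_distance} as written.
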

	
	\begin{theorem}[modified version of \Thm~1.3,\cite{WP_new}]\label{thm_WP_Joon}
		Let $\mathcal{G}\sim\CM_n$ be a sequence of configuration models satisfying Condition \ref{CM_cond} and let $\Delta_0$ be the initial message distribution on $\Omega$.
		Then for any $\varepsilon,\delta>0$, there exists $t_0=t_0(\delta)$ such that for all $t\ge t_0$ we have
		\begin{align}\label{eq-conv-to-t-eps}
			\Pr\left[ \sum_{(u,v) \in E(\mathcal{G})} \mathbf{1}\left\{ \mu_{u \to v}^{(t)} \neq \mu_{u \to v}^{(t_0)} \right\} > \delta n \right] < \varepsilon,
		\end{align}
		where convergence is measured with respect to the distance $ \mathscr{D} $ according to \Def~\ref{def_new_distance}, and $ \mu_{u \to v}^{(t)} $ denotes the WP message after $ t $ rounds.
	\end{theorem}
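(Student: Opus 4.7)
The overall plan is to pass from the random graph $\CM_n$ to its local weak limit (the unimodular Galton--Watson tree), where \Cref{lem_WP_limit} and \Cref{lem_WP_stability_limit} give a clean contraction-based convergence of the message distribution, and then transfer this convergence back to $\CM_n$ using the fact that $\mu_{u\to v}^{(t)}$ depends only on the depth-$t$ neighborhood of the directed edge $(u,v)$. Combined with the monotonicity in \Cref{lem_WP_convergence}, convergence of the marginal laws on a single oriented edge forces the \emph{actual} messages at rounds $t$ and $t'>t$ to coincide with high probability, which is what is needed to bound the expected number of changing edges.

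The first step is the locality observation: by the update rule \eqref{WP_update_rule}, $\mu_{u\to v}^{(t)}$ is a deterministic function of the subgraph induced on vertices within graph-distance $t$ from $u$ along paths that do not traverse the edge $uv$, together with the initial messages on those edges. Under \Cref{CM_cond}, for every fixed $t$ the law of this depth-$t$ directed neighborhood of a uniformly chosen oriented edge of $\CM_n$ converges to the law of the corresponding truncation of the rooted Galton--Watson tree $\mathcal{T}$ with offspring distribution $\widehat{D}$ at the root (a size-biased choice arises because we are conditioning on an oriented edge) and $\widehat{D}$ at every non-root vertex. The second step is to translate the WP dynamics on $\mathcal{T}$ into the iteration of $\Upsilon_{\varphi}$: since children subtrees are i.i.d.\ copies of $\mathcal{T}$, the distribution of the message sent by the root at time $t$ equals $\Upsilon_{\varphi}^{t}(\Delta_0)$ when all leaves of the depth-$t$ truncation are initialized with $\Delta_0$, where $\Upsilon_{\varphi}$ is given explicitly in \Cref{lem_WP_phi}. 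By \Cref{lem_WP_limit}, $\Upsilon_{\varphi}^{t}(\Delta_0)\to\Delta=(1-\alpha^{*},\alpha_{*},\alpha^{*}-\alpha_{*})$, and by \Cref{lem_WP_stability_limit} the convergence is geometric in $\mathscr{D}$, hence in $\dtv$ by \eqref{eq-eq-d}; pick $t_0$ so that $\dtv(\Upsilon_{\varphi}^{t}(\Delta_0),\Delta)<\eta$ for all $t\geq t_0$, where $\eta$ is chosen below as a function of $\delta,\varepsilon$.

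The third step is the crucial monotonicity argument. By \Cref{lem_WP_convergence}, along every directed edge the sequence $(\mu_{u\to v}^{(t)})_{t\ge 0}$ is non-increasing in the order $\msgL<\msgM<\msgU$. Consequently, for $t_0\leq t\leq t'$ one has the deterministic inclusion $\{\mu_{u\to v}^{(t)}\neq \mu_{u\to v}^{(t')}\}=\{\mu_{u\to v}^{(t)}>\mu_{u\to v}^{(t')}\}$, and therefore
\begin{align*}
\Pr\bigl[\mu_{u\to v}^{(t)}\neq \mu_{u\to v}^{(t')}\bigr] \leq 2\,\dtv\bigl(\mathrm{Law}(\mu_{u\to v}^{(t)}),\mathrm{Law}(\mu_{u\to v}^{(t')})\bigr)+o(1),
\end{align*}
where the $o(1)$ term (as $n\to\infty$) absorbs the discrepancy coming from local weak convergence at depth $t'$. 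Combining this with the tree estimate yields $\Pr[\mu_{u\to v}^{(t)}\neq \mu_{u\to v}^{(t')}]\leq 4\eta+o(1)$ uniformly in $t,t'\geq t_0$. Summing over the $\Theta(n)$ oriented edges and taking expectations gives
\begin{align*}
\Erw\Big[\sum_{(u,v)\in E(\mathcal{G})} \mathbf{1}\{\mu_{u\to v}^{(t)}\neq \mu_{u\to v}^{(t_0)}\}\Big] \leq (4\eta+o(1))\cdot\Erw|E(\mathcal{G})|,
\end{align*}
and Markov's inequality then produces \eqref{eq-conv-to-t-eps} after choosing $\eta$ small compared to $\delta\varepsilon/\Erw[\vD]$.

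The main obstacle is justifying the transfer from the Galton--Watson tree back to $\CM_n$ uniformly in $t\geq t_0$: local weak convergence is only a statement at each fixed depth, while the inequality above is required at arbitrarily large $t'$. The standard remedy, which I expect to carry out in detail, is to exploit monotonicity to reduce to a one-sided comparison: on the tree the limit $\Delta$ is reached up to error $\eta$ by some finite depth $T=T(\eta)$, and on $\CM_n$ the first time the message along $(u,v)$ drops below its depth-$T$ tree value can be controlled by a union bound over the at most $\Erw[\vD]^{T}$ vertices in the depth-$T$ neighborhood, which is $o(n)$ in number and hence negligible after taking $n\to\infty$ with $T$ fixed. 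This reduction, together with \Cref{lem_WP_stability_limit}, closes the argument and yields \Cref{thm_WP_Joon}.
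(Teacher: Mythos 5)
Your proposal attempts a self-contained proof, whereas the paper's own proof is a short reduction: it verifies that \cite[Assumption~2.10]{WP_new} holds for the bounded-degree configuration model (via \cite[Remark~2.15]{WP_new}), checks through \Cref{lem_WP_limit} and \Cref{lem_WP_stability_limit} that $\Delta=(1-\alpha^*,\alpha_*,\alpha^*-\alpha_*)$ is a stable WP limit of $\Delta_0$ with respect to $\mathscr{D}$, and then quotes \cite[Thm~1.3]{WP_new}, noting that the quoted theorem survives replacing $\dtv$ by the equivalent metric $\mathscr{D}$. Your fixed-$t$ analysis is essentially correct and the coupling trick is nice: since the messages along each directed edge are a.s.\ non-increasing in the order $\msgL<\msgM<\msgU$, applying the rank function gives $\Pr[X\neq Y]\le 2\,\dtv(\mathrm{Law}(X),\mathrm{Law}(Y))$ for a.s.\ ordered $X\ge Y$ on a three-letter chain, so closeness of the \emph{marginal} laws (from local weak convergence at depth $t+1$ together with \Cref{lem_WP_limit}) really does bound the expected number of discrepant edges, and Markov finishes. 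This legitimately yields \eqref{eq-conv-to-t-eps} for each \emph{fixed} $t\ge t_0$ as $n\to\infty$.

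The genuine gap is uniformity in $t$. By monotonicity the indicator $\mathbf{1}\{\mu^{(t)}_{u\to v}\neq\mu^{(t_0)}_{u\to v}\}$ is non-decreasing in $t$, so the statement the paper actually needs — it takes $\limsup_{t\to\infty}$ \emph{inside} the probability in \eqref{eq:limit-sum-ind} — is equivalent to the single assertion at $t=\infty$, i.e.\ comparing $\mu^{(t_0)}$ with the terminal fixed point, which for fixed $n$ is reached only after up to $O(|E|)$ rounds, far beyond any depth at which local weak convergence applies. Your proposed remedy in the final paragraph does not close this: a union bound over the at most $\Erw[\vD]^{T}$ vertices of a depth-$T$ neighborhood cannot control changes occurring at times $t>T$, because those changes are driven by vertices \emph{outside} that neighborhood; and the deterministic fact that each message changes at most twice only bounds the total number of changes over all time by $4|E|$, not the tail of changes after round $t_0$. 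Closing this gap is precisely the hard content of \cite[Thm~1.3]{WP_new}: one shows that the set of edges whose messages change after round $t_0$ is dominated by a branching process seeded by the few edges still changing at round $t_0$, and the stability condition $\hphi'(\aa)\hphi'(\ab)<1$ makes that propagation process subcritical, so its total progeny stays below $\delta n$ w.h.p. Your argument invokes the contraction only at the level of the one-dimensional distributional recursion $\Upsilon_\varphi$, which is strictly weaker. Either import that subcriticality/expansion argument explicitly, or do what the paper does: verify the hypotheses and cite the theorem.
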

	In other words, the Warning Propagation messages are identical at any time $t\ge t_0$ to those at time $t_0$ except on a set of at most $\delta n$ directed edges.
	Thus, the Warning Propagation converges rapidly under the mild stability conditions.
	Note that, under the Condition~\ref{CM_cond}, the result of \Thm~\ref{thm_WP_Joon} can be used beyond the sparse random graph model $\mathbb{G}(n,d/n)$ for any general configuration model.
	Technically, the number of steps $t_0$ of Warning Propagation (before which WP stabilizes) does not depend on the parameter $n$ or, even not depends on the exact nature of the configuration model $\CM_n$, it only depends on the desired accuracy $\delta$, the WP update rule $\varphi$ and the initial probability distribution $\Delta_0$. 
	\begin{proof}[Proof of \Cref{thm_WP_Joon}]
		Note that \cite[\Thm~1.3]{WP_new} remains valid when the distance $\dtv$ is replaced by an equivalent distance $\mathscr{D}$. Moreover, \cite[Assumption~2.10]{WP_new} holds for the configuration model with bounded degree (see \cite[Remark 2.15]{WP_new}). Since \Cref{lem_WP_limit,lem_WP_stability_limit} yields that $\Delta=\left(1-\alpha^*,\alpha_*,\alpha^*-\alpha_*\right)$ is a stable WP limit of $\Delta_0=(0,0,1)$ with respect to $\mathscr{D}$, \eqref{eq-conv-to-t-eps} follows from\cite[\Thm~1.3]{WP_new}.
	\end{proof}
	
	\subsection{Karp-Sipser Leaf Removal Process}
	Inspired by the seminal paper \cite{KarpSipser} of Karp and Sipser in 1981, in our first result, we analyze the asymptotic behavior of the size of the Karp-Sipser core in the general configuration model. 
	More specifically, we provide an asymptotic bound on the size of the core in an infinite rooted Galton-Watson tree which serves as a local limit of $\CM_n$.
	It is a greedy method for approximating a maximum matching in sparse random graphs.
	Its first phase, so-called "\textit{leaf-removal phase}" is of particular interest in studying the structure of the sparse random graphs.
	Based on the local structure of the graph, this phase operates via a deterministic labeling procedure.
	Given a finite undirected graph $\mathcal{G}=(V,E)$, the leaf removal process proceeds iteratively below.
	\begin{enumerate}[label=(\roman*).]
		\item \textit{Identify leaves.} Find all vertices $v\in V$ in graph $\mathcal{G}$ whose degree is one. These vertices are labeled as leaves.
		\item \textit{Remove leaves and their neighbors.} For each leaf $v\in V$ in $\mathcal{G}$, remove both $v$ and its unique neighbor $u$.
	\end{enumerate}
	Repeat steps (i)-(ii) iteratively until no leaves remain in the graph.
	The remaining non-isolated vertices in the graph $\mathcal{G}$ that are never removed during this process form the so-called \emph{Karp--Sipser core}.
	This core leads to obstruction to greedy matchings that connect to percolation thresholds in the sparse random graph case.
	In the next subsection, we analyze the labeling propagation of the leaf removal process in the Galton-Watson rooted tree, which serves as the local limit of $\CM_n$.
	\subsubsection{\textbf{Node Labeling Propagation}}
	In \Cref{fig_WP_update}, a vertex $u$ sends a $\msgU$ message to its parent $v$ if it receives at least one $\msgU$ from its children and no $\msgL$, suggesting that $u$ may belong to the Karp–Sipser core. However, once the information from its parent $v$ is also considered, $u$ must receive at least two $\msgU$ messages to remain possible in the Karp–Sipser core. We can model it by defining the labeling propagation process on the node.
	
	We define the labeling propagation process $\mathcal{L}:V(\mathcal{G})\to \{\msgM, \msgU\}$, where
	\begin{itemize}
		\item $\msgM$ is labeled when it receives at most one information of $\msgU$ or an information of $\msgL$, meaning that it will be removed in Algorithm \ref{alg_karpsipser} and not in the Karp–Sipser core.
		\item $\msgU$ is labeled when it receives at least two information from $\msgU$ and no information from $\msgL$, meaning that it remains possible in the Karp–Sipser core.
	\end{itemize}
	As our main agenda is to identify whether a node belongs to the Karp-Sipser core or not after a finite number of iterations, we use the Warning propagation induced \emph{Node Labeling Propagation} for identifying and then further computing the survival probability of the root whether it belongs to the core or not. 
	Even if the vertices are labeled with three labels $\cbc{\msgL,\msgM,\msgU}$, according to the Karp-Sipser leaf removal process in a rooted limiting tree, we are mainly interested in whether a vertex $v\in V$ is labeled as $\msgU$ or not. So, we define the WP-induced Node Labeling Propagation as follows:
	\begin{definition}\label{WP_labelprop}
		Given the messages  $\mu_{u \to v}^{(t)} \in \{\msgL, \msgM, \msgU\}$, we define the label of a node $v\in V$ after $t$ rounds of WP as follows:
		\[
		\mu_v^{(t)} = 
		\begin{cases}
			\msgM & \text{if }~\exists u \in \partial v:\ \mu_{u \to v}^{(t)} = \msgL, \text{or there is at most one } u \in \partial v:\ \mu_{u \to v}^{(t)} = \msgU,\\
			\msgU & \text{otherwise}.
		\end{cases}
		\]
	\end{definition}
	\begin{figure}[]
		\centering
		\begin{tikzpicture}[
			L/.style={circle, draw, minimum size=7.5mm, inner sep=0pt, text=red,line width=0.8 pt},
			M/.style={circle, draw, minimum size=7.5mm, inner sep=0pt,text=blue,line width=0.8 pt},
			U/.style={circle, draw, minimum size=7.5mm, inner sep=0pt,text=green,line width=0.8 pt},
			midarrow/.style={
				postaction={decorate},
				decoration={
					markings,
					mark=at position 0.5 with {\arrow{stealth}}
				}
			}
			]
			\node[M] (L1) at (0, 2.5) {M};
			\node[M] (M1) at (-1.5, 0) {};
			\node[M] (M2) at (0, 0) {};
			\node[M] (M3) at (1.5, 0) {};
			\draw[green, thick, midarrow] (M1) -- (L1) node[midway, left] {\textcolor{green}{U}};
			\draw[blue, thick, midarrow] (M2) -- (L1) node[midway, right] {\textcolor{blue}{M}};
			\draw[blue, thick, midarrow] (M3) -- (L1) node[midway, right] {\textcolor{blue}{M}};
			
			\node[M] (M4) at (6, 2.5) {M};
			\node[L] (L2) at (4.5, 0) {};
			\node[U] (U1) at (6, 0) {} ;
			\node[M] (M5) at (7.5, 0) {};
			\draw[red, thick, midarrow] (L2) -- (M4) node[midway, left] {\textcolor{red}{L}};;
			\draw[green, thick, midarrow] (U1) -- (M4) node[midway, right] {\textcolor{green}{U}} ;
			\draw[green, thick, midarrow] (M5) -- (M4) node[midway, right] {\textcolor{green}{U}};
			
			\node[U] (U2) at (12, 2.5) {U};
			\node[U] (U3) at (10.5, 0) {}; 
			\node[M] (M6) at (12, 0) {};
			\node[U] (U4) at (13.5, 0) {};
			\draw[green, thick, midarrow] (U3) -- (U2) node[midway, left] {\textcolor{green}{U}};
			\draw[blue, thick, midarrow] (M6) -- (U2) node[midway, right] {\textcolor{blue}{M}};
			\draw[green, thick, midarrow] (U4) -- (U2) node[midway, right] {\textcolor{green}{U}};	
		\end{tikzpicture}
		\caption{Node labeling with label $\{\textcolor{blue}{M},\textcolor{green}{U}\}$ as defined in \Def~\ref{WP_labelprop} after running the WP-updation rule \eqref{WP_update_rule}}
		\label{fig:enter-label}
	\end{figure}
	
	\begin{remark}\label{re-wp-ks}
		We note that $\mu_{v}^{(2t)} = \msgU$ if and only if $v \in V(\mathcal{G})$ belongs to $E$ after the $t^{\text{th}}$ round of step~(\ref{it_2}) in Algorithm~\ref{alg_karpsipser}. If Algorithm~\ref{alg_karpsipser} executes fewer than $t$ rounds of step~(\ref{it_2}), then $E$ is taken to be the final set of vertices at the end, i.e. the vertices in the Karp--Sipser core.
		
	\end{remark}
	
	\subsection{Discussion}
	The main agenda of the present work is to analyze the asymptotic size of the Karp-Sipser core in general configuration model.
	In other words, it calculates the survival probability of the root being in the Karp-Sipser core after $t$ rounds of Warning Propagation message passing process.
	
	Since early 2000s, there has been a lot of literature discussing the importance of Karp-Sipser core in random graphs.
	Bauer and Golinelli in \cite{BGC} provides a detailed analysis of Karp-Sipser core by stating that the choice of choosing vertices to remove in first step is independent of the core which gives the intrinsic property of the graph.
	Coming to the applications of Karp-Sipser core in understanding large scale networks, not only limited to find near maximum matching but also use in finding independent set \cite{GNS} and the rank of adjacency matrices \cite{BLS,HMZ,HMZ2}.
	Nevertheless, most of the previous papers rely only on the sparse \Erdos-\Renyi~random graphs or some log-concave property because of its nice phenomena that the Karp-Sipser core has almost perfect matching in this model \cite{BLS,BLS2}.
	Although some of the papers address the detailed structure of the Karp–Sipser core in configuration models, their model only allows vertex degree no more than $3$  \cite{BCC}.
	Beside, Budzinski and Alice \cite{BCC} paper depends on the differential equation method.
	Instead here we analyze a limiting distribution of the Karp-Sipser core in configuration model assuming the regularity conditions on the degree distribution and the presence of leaves.
	More precisely, we investigate the corresponding Galton-Watson tree which is the local limit of configuration model and can be formalized as 'local weak convergence' \cite{AS,BordenaveCaputo,BS}.
	
	The most important tool we employ in this paper to analyze the asymptotic size 
	of the Karp--Sipser core is an amplified version of Warning Propagation. 
	Earlier, in 2017, Coja-Oghlan et al. \cite{CCKS} applied Warning Propagation 
	to study the local structure of the $k$-core of a random graph, where they observed that ``a bounded number of iterations suffice to obtain an accurate approximation of the $k$-core'', and moreover that such bounded iterations in a random graph can be characterized by the same process in its local limit. 
	Subsequently, in \cite{CCKS2}, they established a multivariate local limit theorem for both the order and the size of the $k$-core of a random graph, again via Warning Propagation. 
	
	In \cite{WP_new}, Cooley et al. further extended this line of work by providing a detailed analysis of the Warning Propagation message-passing algorithm, showing that it applies not only to the $k$-core of a random graph, 
	but also to a wide class of combinatorial recursive procedures, 
	including Unit Clause Propagation.  
	Furthermore, based on some stability conditions they proposed a generalization of Warning Propagation on multi-type random graph models.
	They showed that under mild assumptions on the random graph model and the stability of the message limit, the Warning Propagation converges rapidly.
	In effect, their fixed point analysis of the message passing process on random graph boils down to the analysis of the procedure on a multi-type Galton Watson tree.
	This multi-type Galton Watson branching process builds a connection between the local structure of the graph and the Warning Propagation messages.
	In random combinatorial structures, one of the most common technique is to analyze the local structure of the underlying graphical model given the message distributions.
	In \cite{Chatterjee}, they also provide a detailed analysis of local weak convergence and its connection to Warning Propagation on the random $k$-XORSAT formula which can be translated into a linear system of equations over $\mathbb{F}_2$.
	A similar kind of analysis has been found in \cite{CCKLR2} where they analyzed the local structure of the graph that represents the matrix over $\mathbb{F}_2$.
	Their corresponding branching process consists of the $\Po(d)$ number of offspring as they analyzed the process on sparse \Erdos-\Renyi~ random graph.
	We here instead make an analysis on more generalized random graph model with multinomial branching process argument.
	
	In addition to this, in \cite{CCKLR2} their analysis on the stability of Warning Propagation limit acts on a bipartite graph consisting of variable and check nodes and for proving the stable limit of the initial message distribution they use the total variation distance.
	Instead in this paper we introduce a new distance metric defined in \eqref{def_new_distance} and show that under the new distance rather than the total variance distance used in \cite{WP_new} and the stability assumptions, their result remains valid.
	Furthermore, we proved that under the new distance metric the limiting final message distribution at the end of Warning Propagation process is the stable WP limit of the initial message distribution which ensures the rapid convergence guarantee of Warning Propagation.
	
	\subsection{Preliminaries and Notation}
	Throughout we assume $p_1>0$ and $\sum_{k\geq 0} k(k-2)p_k > 0$ and also the graph $\mathcal{G}(V,E)$ follows a sequence of configuration model $\CM_{n\ge 0}$.
	Unless specified otherwise we tactically assume that the number of nodes $n$ in the graph $\mathcal{G}$ is sufficiently large for the analysis of the asymptotic size of the Karp-Sipser core.
	Asymptotic notation such as $O(\cdot)$ refers to the limit of large $n$ by default.
	We continue to denote $\aa$ and $\ab$ are the smallest / largest fixed points of $\zeta(\alpha)$ in $[0,1]$.
	The labels $\msgL,\msgM,\msgU$ denote the messages passing from one node to other according to the Warning Propagation rules defined in \ref{WP_update_rule} where $\msgL$ stands for leaf nodes in the graph, $\msgM$ stands for the unique neighbor of the leaves due to matched and $\msgU$ denotes the nodes which are unmatched and survive till the end and belongs to the Karp-Sipser core.
	\subsection{Organization}
	In the remaining sections we work our way to the proof of our main result \Thm~\ref{thm_KSRT}.
	Specifically, in \Sec~\ref{sec_prop2.3} we prove \Prop~\ref{zeta_func_noffixpts_stab} which confirms the stability of the fixed points of the function $\zeta$ defined in \eqref{zeta_eq}.
	In \Sec~\ref{Sec_CWP} we provide a detailed analysis of the enhanced version of Warning Propagation process which contains the proof of \Lem~\ref{lem_WP_limit} and \Lem~\ref{lem_WP_stability_limit} which are the most supportive lemmas to prove our main result.
	Finally, \Sec~\ref{sec_final} gives us the proof of our main result \Thm~\ref{thm_KSRT} by providing a detailed analysis of Warning Propagation along with Node Labeling mechanism first in Galton-Watson tree followed by configuration model.
	\section{Proof of \Prop~\ref{zeta_func_noffixpts_stab}}\label{sec_prop2.3}
	This section consists mainly of the proof of \Prop~\ref{zeta_func_noffixpts_stab}, i.e. the stability of the fixed points of function $\zeta(\alpha)$ with some intermediate results which will lead to proof of our main \Thm~\ref{thm_KSRT}.
	
	Although some steps are mildly intricate, the proof of \Prop~\ref{zeta_func_noffixpts_stab} mostly consists of 'routine calculus'.
	As a convenient shorthand we compute the first and second order derivatives of the function \eqref{xi_func}.
	Its derivatives read
	\begin{align}\label{xi_first_der}
		\xi'=\xi'(\alpha)=\hphi'(\alpha)\hphi'(1-\hphi(\alpha)) \quad \mbox{and}
	\end{align}
	\begin{align}\label{xi_sec_der}
		\xi''=\xi''(\alpha)=\hphi''(\alpha)\hphi'(1-\hphi(\alpha))-
		(\hphi'(\alpha))^2\hphi''(1-\hphi(\alpha))
	\end{align}
	To prove \Cref{zeta_func_noffixpts_stab}, we first state a lemma showing the duality of the solutions to $\zeta(\alpha) = 0$.
	\begin{lemma}\label{lem:solutions-relation-zeta}
		If $\beta$ is a solution to $\zeta(\alpha)=0$ defined in \eqref{zeta_func} in $[0,1]$, then $1 - \hat{\phi}(\beta)$ is also a solution in $[0,1]$. Specifically, $\aa=1 - \hat{\phi}(\ab)$ and $\ab=1 - \hat{\phi}(\aa)$.
	\end{lemma}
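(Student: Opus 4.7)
My plan is to prove the lemma in two steps: first the general duality claim, then the specific identification of $\aa$ with $1-\hphi(\ab)$ and vice versa.

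For the first part, I would start directly from the defining equation. If $\beta\in[0,1]$ satisfies $\zeta(\beta)=0$, then by \eqref{zeta_eq}
\begin{equation*}
  \hphi\bigl(1-\hphi(\beta)\bigr)=1-\beta.
\end{equation*}
Setting $\gamma:=1-\hphi(\beta)$, I need to verify $\zeta(\gamma)=0$, i.e., $\gamma+\hphi(1-\hphi(\gamma))=1$. The key observation is that
\begin{equation*}
  1-\hphi(\gamma)=1-\hphi\bigl(1-\hphi(\beta)\bigr)=1-(1-\beta)=\beta,
\end{equation*}
so $\hphi(1-\hphi(\gamma))=\hphi(\beta)=1-\gamma$, which gives $\zeta(\gamma)=0$ immediately. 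For $\gamma\in[0,1]$, I only need that $\hphi$ maps $[0,1]$ into $[0,1]$, which follows because $\hphi$ is a probability generating function with $\hphi(1)=1$ and non-negative coefficients.

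For the second part, consider the involution $T:[0,1]\to[0,1]$ defined by $T(\alpha)=1-\hphi(\alpha)$. Since $p_1>0$ and $\sum_{k}k(k-2)p_k>0$ force $\hphi$ to be non-constant on $[0,1]$, $\hphi$ is strictly increasing there, so $T$ is strictly decreasing. Moreover, the fixed point equation $\xi(\alpha)=\alpha$ rewrites exactly as $T(T(\alpha))=\alpha$, so $T$ restricted to the set $S:=\{\alpha\in[0,1]:\zeta(\alpha)=0\}$ is an involution, and by the first part $T(S)\subseteq S$. Because $T$ reverses order on $[0,1]$, it must send the minimum of $S$ to the maximum and vice versa: $T(\aa)=\ab$ and $T(\ab)=\aa$, which is the claimed identification.

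I do not anticipate a real obstacle: the first part is a one-line substitution using the fixed-point equation twice, and the second part is a clean order-reversing-involution argument. The only small point worth stating carefully is why $\hphi$ is strictly increasing (so that $T$ genuinely reverses order and cannot collapse $\aa$ and $\ab$ to the same image); this follows from the standing assumption $\sum_k k(k-2)p_k>0$, which guarantees mass beyond the first moment and hence a non-degenerate generating function.
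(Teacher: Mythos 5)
Your proposal is correct and takes essentially the same approach as the paper: the duality claim is the identical one-line substitution of the fixed-point identity $\hphi(1-\hphi(\beta))=1-\beta$ into $\zeta$, and your order-reversing-involution argument for $T(\alpha)=1-\hphi(\alpha)$ is a direct rephrasing of the paper's proof by contradiction, which likewise rests on the strict monotonicity of $1-\hphi$ together with the extremality of $\aa$ and $\ab$. No gaps.
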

	\begin{proof}
		Let \( \beta \) be a solution to \( \zeta(\alpha) = 0 \). Then,
		\[
		\hphi(1 - \hphi(\beta)) = 1 - \beta.
		\]
		This implies that
		\[
		\zeta(1 - \hphi(\beta)) = 1 - \hphi(\beta) + \hphi(1 - \hphi(1 - \hphi(\beta))) - 1 = -\hphi(\beta) + \hphi(\beta) = 0.
		\]
		Thus, \( 1 - \hphi(\beta) \) is also a solution of \( \zeta \). In particular, since \( \alpha_* \) and \( \alpha^* \) denote the smallest and largest solutions of \( \zeta \), if \( \alpha_* \ne 1 - \hphi(\alpha^*) \), then \( 1 - \hphi(\alpha^*) \) must be a distinct solution with \( 1 - \hphi(\alpha^*) > \alpha_* \). Moreover, applying the same argument again, the value
		\[
		1 - \hphi(1 - \hphi(\alpha^*))
		\]
		is also a solution to \( \zeta \). Since \( 1 - \hphi(\cdot) \) is strictly decreasing, we have
		\[
		\alpha^* = 1 - \hphi(1 - \hphi(\alpha^*)) < 1 - \hphi(\alpha_*).
		\]
		But \( 1 - \hphi(\alpha_*) \) is again a solution to \( \zeta \), contradicting the assumption that \( \alpha^* \) is the largest solution. Therefore, we must have \( \alpha_* = 1 - \hphi(\alpha^*) \). By symmetry, it also follows that \( \alpha^* = 1 - \hphi(\alpha_*) \).	
	\end{proof}
	
	\begin{proof}[Proof of \Cref{zeta_func_noffixpts_stab}]
		Since \( \alpha_* \) and \( \alpha^* \) are solutions to the equation \( \zeta(\alpha) = 0 \), they are also fixed points of the function \( \xi \). Note that,  
		\[
		\zeta(0) = \hphi(1 - \hphi(0)) - 1 < 0,
		\]
		and since \( \alpha_* \) is the smallest solution to \( \zeta(\alpha) = 0 \) in the interval \([0, 1]\), it follows that \( \zeta'(\alpha_*) \ge 0 \).
		
		Observe that the derivative of \( \zeta \) satisfies:
		\[
		\zeta'(\alpha) = -\hphi'(\alpha)\hphi'(1 - \hphi(\alpha)) + 1.
		\]
		Combining the above equation with  Lemma~\ref{lem:solutions-relation-zeta}, we obtain:
		\[
		\zeta'(\alpha_*)=-\hphi'(\alpha_*)\hphi'(\alpha^*) + 1 \geq 0.
		\]
		Since $\hphi'(\alpha_*)\hphi'(\alpha^*)>0$, unless \( \hphi'(\alpha_*) \hphi'(\alpha^*) = 1 \), we have $\hphi'(\alpha_*)\hphi'(\alpha^*)\in(0,1)$, as desired.
	\end{proof}
	
	\section{Chasing Warning Propagation Limit}\label{Sec_CWP}
	
	The proof of \Cref{thm_WP_Joon} requires \Cref{lem_WP_phi,lem_WP_limit,lem_WP_stability_limit}, and in this section we prove them in the order of their appearance.
	
	\begin{proof}[Proof of \Cref{lem_WP_phi}]
		Let $\bc{q_{\msgL}',q_{\msgM}',q_{\msgU}'}=\Upsilon_\varphi\bc{q_{\msgL},q_{\msgM},q_{\msgU}}$. According to the WP updating rule in \eqref{def:core-wp}, we have
		\begin{align}
			q'_{\msgL}&=\sum_{k\ge 1}\hat{p}_k \left(q_{\msgM}\right)^{k-1}
			=\hphi\left(q_{\msgM}\right) \label{eq_recurrence_qL} \\
			q'_{\msgM} &= \sum_{k \ge 1} \biasp_k \sum_{\substack{a + b + c = k - 1 \\ a \ge 1}} \binom{k - 1}{a, b, c} \left(q_{\msgL}^{(t)}\right)^a \left(q_{\msgM}^{(t)}\right)^b \left(q_{\msgU}^{(t)}\right)^c
			=1-\hphi\left(q_{\msgM}+q^{(t)}_{\msgU}\right) \label{eq_recurrence_qM}\\
			q_{\msgU}' &= \sum_{k \ge 1} \biasp_k \sum_{\substack{a + b = k - 1 \\ b \ge 1}} \binom{k - 1}{a, b} \left(q_{\msgM}^{(t)}\right)^a \left(q_{\msgU}^{(t)}\right)^b
			=\hphi\left(q_{\msgM}+q_{\msgU}\right)-\hphi\left(q_{\msgM}\right)\label{eq_recurrence_qU},
		\end{align}
		as desired. 
	\end{proof}
	Given \Cref{lem_WP_phi}, we next investigate the WP limit of $\Delta_0=(0,0,1)$.
	
	\begin{proof}[Proof of \Lem~ \ref{lem_WP_limit}]
		For $\Delta=\left(1-\alpha^*,\alpha_*,\alpha^*-\alpha_*\right)$, it is straight forward to check that $\Upsilon_{\varphi}(\Delta)=\Delta$. We next show that $\Delta$ is the WP limit of the initial message distribution $\Delta_0=(0,0,1)$.
		Recall and the edge-biased distribution $\biasp_k$ from \eqref{edge_biased_dist}.
		Also, let the message distribution after $t+1$ step of WP process $\Delta_{t+1}=\Upsilon_{\varphi}(\Delta_t)$ i.e. applying the WP update rule to the message distribution at time $t$. 
		Then \Cref{lem_WP_phi} give that $\Delta_1=(\biasp_1,0,1-\biasp_1)$.
		Further, by \eqref{eq_recurrence_qM} and \eqref{eq_recurrence_qU} in \Cref{lem_WP_phi},
		\begin{align*}
			q_{\msgM}^{(t+1)}+q_{\msgU}^{(t+1)} = 1-\hphi\bc{q_{\msgM}^{(t)}}.
		\end{align*}
		Hence,
		\begin{align}\label{eq:update-0-2}
			q_{\msgM}^{(t+1)} = 1-\hphi\bc{q_{\msgM}^{(t)}+q_{\msgU}^{(t)}}=1-\hphi\bc{1-\hphi\bc{q_{\msgM}^{(t-1)}}}.
		\end{align}
		Since $q_{\msgM}^{(0)}=q_{\msgM}^{(1)}=0$, we conclude from \eqref{eq:update-0-2} that 
		\begin{align*}
			q_{\msgM}^{(2t+1)}=q_{\msgM}^{(2t)}=1-\hphi\bc{1-\hphi\bc{q_{\msgM}^{(2t-2)}}}.
		\end{align*}
		Note that $\aa>0$ is the smallest solution of $\zeta$ in $[0,1]$. 
		Then $\xi(\alpha)=1-\ph(1-\ph(\alpha))>\alpha$ for $\alpha\in [0,\aa)$, as \Cref{zeta_func_noffixpts_stab} yields that
		\begin{align*}
			\zeta'(\aa)=1-\xi'(\aa)>0.
		\end{align*}    
		Since $1-\ph(1-\ph(\alpha))$ is a strictly increasing function,  $1-\ph(1-\ph(\alpha))<1-\ph(1-\ph(\ab))=\aa$ for $\alpha\in[0,\aa)$. 
		Note $q_{\msgM}^{(0)}=0<\aa$. 
		It is then easy to deduce by induction that 
		\begin{align*}
			q_{\msgM}^{(2t)}\in (q_{\msgM}^{(2t-2)},\aa).
		\end{align*}
		Then, the monotone fixed point theorem yields $q_{\msgM}^{(2t+1)}=q_{\msgM}^{(2t)}\uparrow \aa$ as $t\to\infty$, i.e. $q_{\msgM}^{(t)}\uparrow \aa$.
		Further, \eqref{eq_recurrence_qL} yields $q_{\msgL}^{(t)}=\hphi\bc{q_{\msgM}^{(t-1)}}\uparrow \hphi\bc{\aa}$.
		And thus, $q_{\msgU}^{(t)}=\bc{1-q_{\msgM}^{(t)}-q_{\msgL}^{(t)}}\downarrow \bc{1-\hphi\bc{\aa}-\aa}=\ab-\aa$, as desired.
	\end{proof}
	
	Finally, we prove that the distance $\mathscr{D}$ is a contraction near this WP limit.
	\begin{proof}[Proof of \Lem~\ref{lem_WP_stability_limit}]
		By \Cref{zeta_func_noffixpts_stab}, we have $\hphi'(\aa)\hphi'(\ab)<1$. Hence, there exists a constant $\delta>0$ such that
		\begin{align*}
			\hphi'(\aa)\hphi'(\ab)\max\cbc{\bc{\frac{\hphi'(\aa+\delta)}{\hphi'(\aa)}}^2,\bc{\frac{\hphi'(\ab+\delta)}{\hphi'(\ab)}}^2} \leq 1 - \delta,
		\end{align*}
		which implies that
		\begin{align}\label{eq:upbound-derivative}
			\hphi'(\aa+\delta) \leq (1-\delta)\sqrt{\frac{\hphi'(\aa)}{\hphi'(\ab)}} \quad \text{and} \quad \hphi'(\ab+\delta) \leq (1-\delta)\sqrt{\frac{\hphi'(\ab)}{\hphi'(\aa)}}.
		\end{align}
		To verify that under $\mathscr{D}$, the point $\Delta$ is the stable WP limit of $\Delta_0$, it suffices to show that for any $\eps_1, \eps_2 \in (-\delta, \delta)$,
		\begin{align*}
			\mathscr{D}\bc{\Upsilon_{\varphi}(\Delta), \Upsilon_{\varphi}\bc{\Delta + (\eps_1, -\eps_1 - \eps_2, \eps_2)}} \leq (1 - \delta)\mathscr{D}\bc{\Delta, \Delta + (\eps_1, -\eps_1 - \eps_2, \eps_2)}.
		\end{align*}
		Indeed, since $    \Upsilon_{\varphi}(x,y,z)=\left(\hphi(y),1-\hphi(y+z),\hphi(y+z)-\hphi(y)\right)$ and $\Upsilon_{\varphi}(\Delta) = \Delta$ by \Cref{lem_WP_limit}, it remains to prove that
		\begin{align}\label{eq:expand-dif-dne}
			\sqrt[4]{\frac{\hphi'(\aa)}{\hphi'(\ab)}} \abs{\hphi(\ab) - \hphi(\ab - \eps_2)} + \sqrt[4]{\frac{\hphi'(\ab)}{\hphi'(\aa)}} \abs{\hphi(\aa) - \hphi(\aa + \eps_1)} \leq (1 - \delta)\left[\sqrt[4]{\frac{\hphi'(\aa)}{\hphi'(\ab)}} \abs{\eps_1} + \sqrt[4]{\frac{\hphi'(\ab)}{\hphi'(\aa)}} \abs{\eps_2}\right].
		\end{align}
		By the mean value theorem, the monotonicity of $\hphi'$, and  \eqref{eq:upbound-derivative}, we obtain
		\begin{align}\label{eq-dif-eps1}
			\abs{\hphi(\aa) - \hphi(\aa + \eps_1)} \leq \hphi'(\aa + \delta)\abs{\eps_1} \leq (1 - \delta)\sqrt{\frac{\hphi'(\aa)}{\hphi'(\ab)}}\abs{\eps_1},
		\end{align}
		and
		\begin{align}\label{eq-dif-eps2}
			\abs{\hphi(\ab) - \hphi(\ab - \eps_2)} \leq \hphi'(\ab + \delta)\abs{\eps_2} \leq (1 - \delta)\sqrt{\frac{\hphi'(\ab)}{\hphi'(\aa)}}\abs{\eps_2}.
		\end{align}
		
		Combining \eqref{eq-dif-eps1} and \eqref{eq-dif-eps2} yields \eqref{eq:expand-dif-dne}, as desired.
	\end{proof}
	\section{Proof of \Thm~\ref{thm_KSRT}}\label{sec_final}
	Analyzing the asymptotic behavior of the core size in $\CM_n$
	in the sparse limit, we take advantage of the fact that the local structure of the configuration model $\CM_n$ converges to a Galton-Watson tree.
	Since \ref{thm_WP_Joon} ensures that a finite number of steps of the Warning Propagation process already provide a good estimate of the final outcome, and a finite number of steps of the process on the graph is close to that on its local limit, it is sufficient to study Warning Propagation on the local limit.
	
	Following this idea, in this section we will prove our main theorem by using the concept of Warning Propagation process first applying to the Galton-Watson limiting tree followed by the random graph model $\mathcal{G}\sim\CM_n$.
	\subsection{Warning Propagation in Galton-Watson rooted tree}
	Consider the WP-messages on their local limit, a rooted unimodular Galton–Watson tree $(\mathcal{T}, o)$ with root degree distribution $(p_k)_{k \geq 0}$. 
	The degree distribution of all non-root vertices then follows the size-biased distribution $(\biasp_{k+1})_{k \geq 0}$ defined in \eqref{edge_biased_dist}. 
	On this Galton–Watson tree, we only consider the messages $\tilde\mu_{u \to v}^{(t)}$ where $v$ is the parent of $u$. 
	We initialize $\tilde\mu_{u \to v}^{(0)} = \msgU$ for all such $(u, v)\in E(\mathcal{T})$ pairs, and update the messages according to \Cref{def:core-wp}. 
	Let $x_1, \ldots, x_d$ be the children of the root $o$. 
	Recall that the initial message distribution $\Delta_0 = (0, 0, 1)$ and $\Delta_t=(q_{\msgL}^{(t)},q_{\msgM}^{(t)},q_{\msgU}^{(t)})= \Upsilon_{\varphi}(\Delta_{t-1})$. 
	\begin{claim}\label{Claim_GW}
		The message distribution after $t$ rounds of Warning Propagation process is given by,
		$$\Delta_{t}=\bc{\Pr\bc{\tilde\mu_{x_i \to o}^{(t)}=\msgL},\Pr\bc{\tilde\mu_{x_i \to o}^{(t)}=\msgM},\Pr\bc{\tilde\mu_{x_i \to o}^{(t)}=\msgU}}$$.
	\end{claim}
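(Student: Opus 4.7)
The plan is to prove the claim by induction on $t$, using the branching property of the Galton--Watson tree together with \Lem~\ref{lem_WP_phi}. To make the induction run cleanly, I would actually prove the slightly stronger statement that for \emph{every} non-root vertex $u$ with parent $v$ in $(\mathcal{T},o)$, the message $\tilde\mu_{u\to v}^{(t)}$ has distribution $\Delta_t$, and moreover that messages sent from distinct children of a common vertex are mutually independent. The desired conclusion is then the special case $u=x_i$, $v=o$.

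The base case $t=0$ is immediate from the initialization $\tilde\mu_{u\to v}^{(0)}=\msgU$, which corresponds to the distribution $(0,0,1)=\Delta_0$. For the inductive step, fix a non-root vertex $u$ with parent $v$ and let $y_1,\ldots,y_m$ denote the children of $u$ in $\mathcal{T}$. By the definition of the unimodular Galton--Watson tree, $m$ has the size-biased offspring distribution, i.e.\ $\Pr(m=k-1)=\hat p_k$, so that $m$ has the same law as $\widehat D$. Moreover, the subtrees rooted at $y_1,\ldots,y_m$ are, conditionally on $m$, independent Galton--Watson trees in which every vertex has offspring distribution $\widehat D$. Since the WP message $\tilde\mu_{y_j\to u}^{(t)}$ is a deterministic function of the subtree rooted at $y_j$ (the update rule \eqref{WP_update_rule} is local and only looks away from the target), these messages are conditionally independent given $m$, and by the inductive hypothesis each has marginal distribution $\Delta_t$.

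Applying the update rule in \Def~\ref{def:core-wp} gives $\tilde\mu_{u\to v}^{(t+1)}=\varphi\bigl(\tilde\mu_{y_1\to u}^{(t)},\ldots,\tilde\mu_{y_m\to u}^{(t)}\bigr)$. Conditioning on $m=\widehat D$ and taking expectations, the distribution of $\tilde\mu_{u\to v}^{(t+1)}$ is exactly $\Upsilon_\varphi(\Delta_t)$ by the definition of $\Upsilon_\varphi$ in \Def~\ref{def_WP_stability_first}, and the explicit computation carried out in the proof of \Lem~\ref{lem_WP_phi} shows this equals $\Delta_{t+1}$. The independence clause in the inductive hypothesis also propagates: for distinct children of the same parent, their messages depend only on their respective disjoint subtrees, which are independent by the branching property.

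There is no real analytic obstacle in this argument; the only point that requires mild care is to justify that the WP message $\tilde\mu_{u\to v}^{(t)}$ is indeed measurable with respect to the subtree rooted at $u$, so that the independence of subtrees in the Galton--Watson construction translates into independence of the messages. Once this is observed, the rest is a direct consequence of \Lem~\ref{lem_WP_phi}, and the claim follows by setting $u=x_i$ and $v=o$.
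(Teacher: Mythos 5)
Your proposal is correct and follows essentially the same route as the paper: induction on $t$, using the self-similarity of the Galton--Watson tree to apply the hypothesis to the children's subtrees, their mutual independence, and the update rule to identify the next distribution as $\Upsilon_\varphi(\Delta_t)=\Delta_{t+1}$. The only cosmetic difference is that you invoke \Lem~\ref{lem_WP_phi} for the inductive computation (and make the strengthened hypothesis for all non-root vertices explicit), whereas the paper re-derives the same multinomial sums inline.
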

	\begin{proof}
		We prove this claim by induction. 
		For the base case, by our initialization $\tilde\mu_{x_i \to o}^{(0)} = \msgU~(\forall i\in[d])$,
		$$\Delta_{0}=\bc{\Pr\bc{\tilde\mu_{x_i \to o}^{(0)}=\msgL},\Pr\bc{\tilde\mu_{x_i \to o}^{(0)}=\msgM},\Pr\bc{\tilde\mu_{x_i \to o}^{(0)}=\msgU}},$$
		that is, the claim holds for $t=0$.
		To advance induction, assume that the claim holds for some fixed \(t\), and let \(y_1, \ldots, y_{\partial_i}\) be the children of \(x_i\).
		Also, let \((\mathcal{T}_x, x)\) denote the subtree of \((\mathcal{T}, o)\) rooted at \(x\), i.e. the subtree consisting of \(x\) and all its descendants.
		Then \((\mathcal{T}_{x_i}, x_i)\) and \((\mathcal{T}_{y_j}, y_j)\) are identically distributed. 
		As a consequence, the induction hypothesis yields
		$$\Delta_{t}=\bc{\Pr\bc{\tilde\mu_{y_j \to x_i}^{(t)}=\msgL},\Pr\bc{\tilde\mu_{y_j \to x_i}^{(t)}=\msgM},\Pr\bc{\tilde\mu_{y_j \to x_i}^{(t)}=\msgU}}.$$
		Furthermore, we note that the subtrees \(\{(\mathcal{T}_{y_j}, y_j)\}_{j \in [\partial_i]}\), and hence the messages \(\{\tilde\mu_{y_j \to x_i}^{(t)}\}_{j \in [\partial_i]}\), are mutually independent. 
		Therefore, applying the update rule from \Cref{def:core-wp}, and using the fact that \(\widehat{D}\) follows the size-biased distribution \((\biasp_{k+1})_{k \geq 0}\), we obtain
		\begin{align*}
			\Pr\bc{\tilde\mu_{x_i \to o}^{(t+1)}=\msgL} &= \sum_{k \ge 0} \Pr\bc{\widehat{D}=k} \cdot \left(q_{\msgM}^{(t)}\right)^{k-1}\\
			&=\hphi\bc{q_{\msgM}^{(t)}}=q_{\msgL}^{(t+1)}.\\
			\Pr\bc{\tilde\mu_{x_i \to o}^{(t+1)}=\msgM} &= \sum_{k \ge 0} \Pr\bc{\widehat{D}=k} \sum_{\substack{a + b + c = k \\ a \ge 1}} \binom{k }{a, b, c} \left(q_{\msgL}^{(t)}\right)^a \left(q_{\msgM}^{(t)}\right)^b \left(q_{\msgU}^{(t)}\right)^c\\
			&=1-\hphi\bc{q_{\msgM}^{(t)}+q_{\msgU}^{(t)}}=q_{\msgM}^{(t+1)};\\
			\Pr\bc{\tilde\mu_{x_i \to o}^{(t+1)}=\msgU} &= \sum_{k \ge 0} \Pr\bc{\widehat{D}=k} \sum_{\substack{a + b = k  \\ b \ge 1}} \binom{k }{a, b} \left(q_{\msgM}^{(t)}\right)^a \left(q_{\msgU}^{(t)}\right)^b \\
			&=\hphi\bc{q_{\msgM}^{(t)}+q_{\msgU}^{(t)}}-\hphi\bc{q_{\msgM}^{(t)}}=q_{\msgU}^{(t+1)};
		\end{align*}
		Hence, we conclude that the claim also holds for \(t+1\).
		
		Since both the base case and the induction step have been proved to be true, by mathematical induction the claim holds true for any integer $t\geq 0$.
	\end{proof}
	Now using the concept of Node Labeling Propagation defined in \Cref{WP_labelprop} we are going to prove the probability that the root will survive i.e. belongs to the Karp-Sipser core after $t$ rounds of Warning Propagation process.
	Let $\tilde\mu_o^{(t)}$ be the WP induced Node Labeling Propagation with the rule defined in \Cref{WP_labelprop}. 
	Then,
	\begin{align*}
		\Pr\bc{\tilde\mu_o^{(t)}=\msgU}=&1-\sum_{k \ge 0} \Pr\bc{D=k} \sum_{\substack{a + b + c = k \\ a \ge 1}} \binom{k }{a, b, c} \left(q_{\msgL}^{(t)}\right)^a \left(q_{\msgM}^{(t)}\right)^b \left(q_{\msgU}^{(t)}\right)^c\\
		&-\sum_{k \ge 0} \Pr\bc{D=k} \sum_{\substack{a + b= k \\ b \le 1}} \binom{k }{a, b} \left(q_{\msgM}^{(t)}\right)^a \left(q_{\msgU}^{(t)}\right)^b.
	\end{align*}
	Since $D$ follows the distribution $(p_k)_{k\geq 0}$,
	\begin{align*}
		&1-\sum_{k \ge 0} \Pr\bc{D=k} \sum_{\substack{a + b + c = k \\ a \ge 1}} \binom{k }{a, b, c} \left(q_{\msgL}^{(t)}\right)^a \left(q_{\msgM}^{(t)}\right)^b \left(q_{\msgU}^{(t)}\right)^c=\phi\bc{q_{\msgM}^{(t)}+q_{\msgU}^{(t)}};\\
		&\sum_{k \ge 0} \Pr\bc{D=k}   \left(q_{\msgM}^{(t)}\right)^k =\phi\bc{q_{\msgM}^{(t)}};\\
		&\sum_{k \ge 0} \Pr\bc{D=k}  k q_{\msgU}^{(t)} \left(q_{\msgM}^{(t)}\right)^{k-1}=q_{\msgU}^{(t)}\phi'\bc{q_{\msgM}^{(t)}}.
	\end{align*}
	Consequently,
	\begin{align*}
		\Pr\bc{\tilde\mu_o^{(t)}=\msgU}=\phi\bc{q_{\msgM}^{(t)}+q_{\msgU}^{(t)}}-\phi\bc{q_{\msgM}^{(t)}}-q_{\msgU}^{(t)}\phi'\bc{q_{\msgM}^{(t)}}.
	\end{align*}
	In the next subsection, we will prove that the above phenomenon remains true for the graphical model $\mathcal{G}\sim\CM_n$.
	\subsection{Warning Propagation in Graph}
	Consider a uniformly chosen vertex \(v\in V(\mathcal{G})\) in the graph \(\mathcal{G}(V,E)\), and let \(u_1, \ldots, u_{\partial_v}\in V(\mathcal{G})\) be its neighbors. 
	Then the rooted graph \((\mathcal{G}, v)\) converges locally in probability to the previously defined rooted unimodular Galton--Watson tree \((\mathcal{T}, o)\).
	Since the message \(\mu_{u_i \to v}^{(t)}\) depends only on the subgraph induced by the vertices at the graph distance at most \(t+1\) from \(v\), and the WP-induced Node Labeling propagation \(\mu_v^{(t)}\) depends only on the incoming messages \(\{\mu_{u_i \to v}^{(t)}\}_{i \in [\partial_v]}\), it follows that \(\mu_v^{(t)}\) is a function of the \((t+1)\)-neighborhood of \(v\). 
	Consequently,
	\begin{align}\label{eq:conv-p=1}
		\lim_{n\to\infty}\frac{1}{n}\sum_{v\in[n]}\ind{\mu_{v}^{(t)}=\msgU}\xrightarrow{p} \Pr\bc{\tilde\mu_{o}^{(t)}=\msgU}=\phi\bc{q_{\msgM}^{(t)}+q_{\msgU}^{(t)}}-\phi\bc{q_{\msgM}^{(t)}}-q_{\msgU}^{(t)}\phi'\bc{q_{\msgM}^{(t)}}.
	\end{align}
	On the other hand, by \Cref{thm_WP_Joon}, for all \(\varepsilon, \delta > 0\), there exists \(t_0 = t_0(\delta)\) such that, with high probability, for all \(t \geq t_0\), we have
	\[
	\Pr\bc{ \sum_{(u,v) \in E(\mathcal{G})} \mathbf{1}\left\{ \mu_{u \to v}^{(t)} \neq \mu_{u \to v}^{(t_0)} \right\} > \delta n } < \varepsilon,
	\]
	where convergence is measured with respect to the distance $ \mathscr{D}(\cdot,\cdot) $. 
	If for all neighbors $u\in V(\mathcal{G})$ of $v$, $\mu_{u \to v}^{(t)} = \mu_{u \to v}^{(t_0)}$, then $\mu_{v}^{(t)} =\mu_{ v}^{(t_0)}$. 
	Hence, for all $t\geq t_0$,
	\begin{align}\label{eq:conv-p-label}
		\Pr\bc{\sum_{v \in [n]} \mathbf{1}\left\{ \mu_{v}^{(t)} \neq \mu_{ v}^{(t_0)} \right\} > \delta n } < \varepsilon.
	\end{align}
	Note from \Cref{lem_WP_limit} that $\lim_{t\to\infty} \Delta_t=\Delta$. 
	For sufficiently large $t_0$,
	\begin{align}\label{eq:conv-t_0}
		\abs{\phi\bc{q_{\msgM}^{(t_0)}+q_{\msgU}^{(t_0)}}-\phi\bc{q_{\msgM}^{(t_0)}}-q_{\msgU}^{(t_0)}\phi'\bc{q_{\msgM}^{(t_0)}}-\phi(\alpha^*)+\phi(\alpha_*)+(\alpha^*-\alpha_*)\phi'(\alpha_*)}\leq \delta.
	\end{align}
	Combining \eqref{eq:conv-p=1}, \eqref{eq:conv-p-label}, and \eqref{eq:conv-t_0} yields
	\begin{align}\label{eq:limit-sum-ind}
		&\limsup_{n\to\infty}\Pr\bc{\limsup_{t\to\infty}\abs{\frac{1}{n}\sum_{v\in[n]}\ind{\mu_{v}^{(t)}=\msgU}- \phi(\alpha^*)+\phi(\alpha_*)+(\alpha^*-\alpha_*)\phi'(\alpha_*)} > 3\delta  }\\
		\leq&\limsup_{n\to\infty}\Pr\bc{\limsup_{t\to\infty}\abs{\frac{1}{n}\sum_{v\in[n]}\ind{\mu_{v}^{(t)}=\msgU}- \frac{1}{n}\sum_{v\in[n]}\ind{\mu_{v}^{(t_0)}=1}} > \delta  }\\
		&+\limsup_{n\to\infty}\Pr\bc{\abs{\frac{1}{n}\sum_{v\in[n]}\ind{\mu_{v}^{(t_0)}=\msgU}- \phi\bc{q_{\msgM}^{(t_0)}+q_{\msgU}^{(t_0)}}-\phi\bc{q_{\msgM}^{(t_0)}}-q_{\msgU}^{(t_0)}\phi'\bc{q_{\msgM}^{(t_0)}}} > \delta  } \leq \varepsilon.
	\end{align}
	\vspace{2mm}
	\begin{proof}[Proof of \Thm~\ref{thm_KSRT}]
		Recall from \Cref{re-wp-ks}
		that \(\mu_{v}^{(2t)} = \msgU\) if and only if vertex \(v\in V(\mathcal{G})\) will be in $E$ after \(t^{\text{th}}\) round of step (\ref{it_2}) in Algorithm \ref{alg_karpsipser}.  Hence,
		\begin{align}\label{eq:limit-ks}
			\lim_{t \to \infty} \frac{1}{n} \sum_{v \in [n]} \ind{\mu_v^{(2t)} = \msgU} = \frac{|\ks|}{n}.
		\end{align}
		Combining \eqref{eq:limit-sum-ind} and \eqref{eq:limit-ks}, and using the arbitrariness of \(\delta\) and \(\varepsilon\), the result follows.    
	\end{proof}
	\vspace*{5mm}
	\subsection*{\large{Acknowledgment}}
	We would like to thank Amin Coja-Oghlan and James Martin for valuable discussion.
	Joon Hyung Lee acknowledges support from the Dutch Research Council (NWO) as part of the project \emph{Boosting the Search for New Quantum Algorithms with AI (BoostQA)}, file number NGF.1623.23.033, under the research programme \emph{Quantum Technologie 2023}.
	Haodong Zhu is supported by the NWO Gravitation project NETWORKS under grant no. 024.002.003 and the European Union’s Horizon 2020 research
	and innovation programme under the Marie Skłodowska-Curie grant agreement no. 945045.
	\newpage

	\section{appendix}
	\subsection{Proof of Fact \ref{lem_WP_convergence}}
	Let us define a hierarchical order on the message sets with alphabet $\cbc{\msgL, \msgM, \msgU}$ as in \Rem~\ref{lem_hierarchy}.
	For the proof, we proceed in two parts by interpreting the inequality $\mu^{(t+1)}_{u \to v} \leq \mu^{(t)}_{u \to v}$ concerning their order:
	\begin{itemize}
		\item \emph{Monotonicity Property:} For the first part of the \Lem~\ref{lem_WP_convergence}, we prove by induction on $t$ that for all $(u,v)\in E$, the message sequence is non-increasing in this order:
		\[
		\mu^{(t+1)}_{u \to v} \leq \mu^{(t)}_{u \to v},
		\]		
		Base Case: At $t=0$, we have $\msg{u}{v}{0}=\msgU$ for all $(u,v)\in E$ as per initialization.
		Thus for the message at time $t=1$, $\msg{u}{v}{1}\in \{\msgL,\msgM,\msgU\}\leq \msgU$.
		More specifically, at time $t=1$, the message $\msg{u}{v}{1}$ can be either stuck to its previous iteration value i.e. $\msgU$ or can decrease to $\msgM$. 
		So, according to \Rem~\ref{lem_hierarchy} the base case holds true.\\
		Inductive Step: Assume that for some $t\geq 0$, we have $\msg{u}{v}{t}\leq \msg{u}{v}{t-1}$ for all $(u,v)\in E$.
		We must show that the same holds for time $(t+1)$.
		
		According to the WP update rule defined in \eqref{WP_update_rule} if a message $\msg{w}{u}{t} (\forall w\in \partial u\backslash\cbc{v})$ decreases from either $\msgU \to \msgM$ or from $\msgM \to \msgL$,then according to the Karp-Sipser leaf removal algorithm, the condition for returning to $\msgL$ or $\msgM$ is met sooner than before, that is the messages can decrease to $\msgM$ and $\msgL$ respectively in the consequent iterations.
		In particular, if $\msg{w}{u}{t}=\msgU$, then it only changes to $\msgM$ or $\msgL$, never to a larger value in the order.
		That is, at time $(t+1)$, the message $\msg{u}{v}{t+1}$ can be either any of the $\msgU$ or $\msgM$ messages and in the next iteration, it can be either of the $\msgU,\msgM$ or $\msgL$ messages.
		Hence, for all $(u,v)\in E$ the message update sequence is monotone non-increasing in its input under the message ordering scheme.
		Thus,
		\[
		\mu^{(t+1)}_{u \to v} \leq \mu^{(t)}_{u \to v},
		\]	
		completing the induction.
		\vspace*{2mm}
		\item \emph{Convergence:} Although the Warning Propagation behaves similarly in both directed and undirected graphs, in the case of a finite undirected graph $\mathcal{G}(V,E)\sim\CM_{n}$, internally it maintains the directed message from one node to another node.
		Thus, the asymmetry in the WP update rule \ref{WP_update_rule} introduces a direction even if the underlying graph $\mathcal{G}$ is undirected.
		We now show that the message sequence $\msg{u}{v}{t}$ stabilizes after at most $\abs{E}$ iterations.
		Note that for the edge $(u,v)\in E$ the message sequence $\msg{u}{v}{t} \in \cbc{\msgL,\msgM,\msgU}$ is non-increasing and can change at most twice:
		\[
		\msgU\to\msgM\to\msgL
		\]
		Hence, each message changes value atmost two times.
		Since the number of directed edges is $2\abs{E}$, the total number of updates over all messages is at most $4\abs{E}$.
		Now define $t^*$ to be the first time such that 
		\begin{align*}
			\msg{u}{v}{t^*+1} =\msg{u}{v}{t^*} ~~~\mbox{for all} ~(u,v) \in E(\mathcal{G})
		\end{align*}
		We claim that $t^*\leq \abs{E}$.
		
		Consider a dependency graph over messages: if at time $t$ the message $\msg{w}{u}{t}$ changes, then at time $(t+1)$ it may cause the message $\msg{u}{v}{t+1}$ to change for each $v \in \partial u \backslash\cbc{w}$ and this defines a dependency DAG (direct acyclic graph) between directed edges as well as messages.
		
		Due to monotonicity, the DAG is also acyclic and each edge depends on at most $\Delta(u)$ incoming messages (where $\Delta$ is the maximum degree of node $u$) for finite number of edges, the total number of rounds needed before all the messages stabilize is bounded by the length of the longest casual chain in the DAG -- at most $\abs{E}$ edges.   
	\end{itemize}
\end{document}